\newcommand{\bb}{\mathbb}
\newcommand{\R}{\bb R}
\newcommand{\Q}{\bb Q}
\newcommand{\old}[1]{{}}
\newtheorem{theorem}{Theorem}[section]
\newtheorem{corollary}[theorem]{Corollary}
\newtheorem{lemma}[theorem]{Lemma}
\newtheorem{fact}[theorem]{Fact}
\newtheorem{remark}[theorem]{Remark}
\newtheorem{conjecture}[theorem]{Conjecture}
\begin{document}

\title{A Counterexample to a Conjecture of Gomory and Johnson}

\author{
Amitabh Basu \thanks{Supported by a Mellon Fellowship.}   \\
Tepper School of Business, Carnegie Mellon University, Pittsburgh,
PA 15213
\\ abasu1@andrew.cmu.edu
\\ $\;$ \\
Michele Conforti    \\
Department of Mathematics, University of Padova, Italy
\\conforti@math.unipd.it
\\ $\;$ \\
G\'erard Cornu\'ejols
\thanks{Supported by NSF grant CMMI0653419,
ONR grant N00014-03-1-0188 and ANR grant ANR06-BLAN-0375.}   \\
Tepper School of Business, Carnegie Mellon University, Pittsburgh,
PA 15213 \\ and LIF, Facult\'e des Sciences de Luminy, Universit\'e
de Marseille, France
\\ gc0v@andrew.cmu.edu
\\ $\;$ \\
Giacomo Zambelli \\
Department of Mathematics, University of Padova, Italy
\\giacomo@math.unipd.it}

\date{November 2008, Revised November 2009}

\maketitle

\begin{abstract}
In {\em Mathematical Programming} 2003, Gomory and Johnson
conjecture that the facets of the infinite group problem are always
generated by piecewise linear functions. In this paper we give an
example showing that the Gomory-Johnson conjecture is false.
\end{abstract}

\section{Introduction}

Let $f \in ]0,1[$ be given.
Consider the following infinite group problem (Gomory and Johnson \cite{gj}) with a single equality constraint and a nonnegative integer variable $s_r$ associated with each real number $r \in [0,1[$.

\begin{equation} \label{SI}
\begin{array}{rrcl}
          & \sum_{r\in [0,1[} r s_r & = & f                  \\
          &  s_r  & \in & \mathbb{Z}_+ \qquad \forall r\in [0,1[ \\
                   & & s & \mbox{has finite support,}
\end{array}
\end{equation}
\noindent
where additions are performed modulo 1. Vector $s$ has {\em finite support} if $s_r \not= 0$ for a finite number of distinct $r \in [0,1[$.

Gomory and Johnson~\cite{gj} say that a function $\pi : \; [0,1[ \; \rightarrow
\mathbb{R}$ is a {\em valid function} for~\eqref{SI} if $\pi$
is nonnegative, $\pi (0) =0$, and every
solution of (\ref{SI}) satisfies $$\sum_{r\in [0,1[} \pi(r) s_r \geq
1.$$
Note that, if $\pi$ is a valid function, then  $\pi (f) \geq 1$.

For any valid function $\pi$, let $P(\pi)$ be the set of solutions which
satisfy $\sum_{r\in [0,1[} \pi(r) s_r \geq 1$ at equality. An
inequality $\pi$ is a {\em facet} for~\eqref{SI} if and only if $P(\pi^*)
\supset P(\pi)$ implies $\pi^* = \pi$ for every valid function $\pi^*$.

A function is {\em piecewise linear} if
there are finitely many values $0=r_0 < r_1 < \ldots < r_k=1$ such
that the function is of the form $\pi (r) = a_j r + b_j$ in interval
$[r_{j-1},r_j[$, for $j = 1, \ldots k$. The {\em slopes} of a
piecewise linear function are the different values of $a_j$ for $j =
1, \ldots k$.

Gomory and Johnson \cite{sc} gave many examples of facets, and in
all  their examples the facets are piecewise linear. This led them
to formulate the following conjecture, which they describe as
``important and challenging''.

\begin{conjecture}[Facet Conjecture]\label{conj}Every continuous facet for~\eqref{SI} is piecewise linear. \end{conjecture}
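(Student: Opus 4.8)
\noindent\textbf{Proof strategy (refuting Conjecture~\ref{conj}).}
Since the next step of the paper must be to \emph{disprove} this conjecture, the plan is to exhibit a function $\pi:[0,1[\,\to\mathbb{R}$ that is a continuous facet for~\eqref{SI} but has infinitely many distinct slopes, hence is not piecewise linear in the sense defined above. Two standard tools should be set up first. The first is the classical reduction of validity and facetness to pointwise inequalities: a valid function may be assumed \emph{minimal}, and a minimal $\pi$ is characterized by $\pi(0)=0$, \emph{subadditivity} $\pi(x)+\pi(y)\ge\pi(x+y)$ (addition mod $1$), and the \emph{symmetry condition} $\pi(x)+\pi(f-x)=1$ for all $x$ (which also forces $0\le\pi\le 1$ and $\pi(f)=1$). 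The second is a usable sufficient condition for a minimal function to be a facet: writing $E(\pi)=\{(x,y):\pi(x)+\pi(y)=\pi(x+y)\}$ for the additivity domain, I would translate ``$P(\pi^*)\supseteq P(\pi)$ for a valid $\pi^*$'' into ``$E(\pi)\subseteq E(\pi^*)$ for a minimal $\pi^*$'', so that facetness of $\pi$ reduces to the claim that no minimal valid function other than $\pi$ has additivity domain containing $E(\pi)$.

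The core of the argument is the \emph{construction}. I would begin from a simple piecewise-linear minimal valid function for a carefully chosen $f$ — a two-slope Gomory--Johnson sawtooth whose additivity domain is already ``large'' — and then perturb it on a small subinterval by a uniformly convergent, self-similar sequence of ever-smaller triangular corrections, each level introducing a fresh breakpoint and a fresh slope while exactly preserving the symmetry relation with the $f-x$ fiber and never violating subadditivity. The limit function $\pi$ is continuous (uniform convergence) but acquires infinitely many slopes, so it is not piecewise linear. The positions and heights of the triangles, the geometric contraction ratio, and the value of $f$ must all be tuned so that these competing requirements hold simultaneously.

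The \emph{verification} then has two parts. (i) Minimality: check $\pi(0)=0$, the symmetry condition, and subadditivity; subadditivity is the delicate point and would be established level by level, bounding the correction terms and passing to the limit. (ii) Facetness: take any minimal valid $\pi^*$ with $E(\pi)\subseteq E(\pi^*)$. On each maximal interval where $\pi$ is affine, the inherited additivity equations together with the Interval Lemma force $\pi^*$ to be affine there as well, and the additivity relations linking adjacent intervals fix the ratios of its slopes; continuity of $\pi^*$ propagates this across the accumulation point of the breakpoints, and finally $\pi^*(0)=0$ together with the normalization $\pi^*(f)=1$ pins down every remaining constant, yielding $\pi^*=\pi$.

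The main obstacle is the tension deliberately built into the construction: subadditivity pushes toward few slopes and ``flat'' behavior, non-piecewise-linearity demands infinitely many slopes, and facetness wants $E(\pi)$ as rich as possible — which is again easiest with few slopes. Engineering one $\pi$ that meets all three constraints at once, and, on the facet side, upgrading the finite interval-by-interval Interval-Lemma argument so that it survives a limit over countably many shrinking pieces, is where the real difficulty lies; the remaining estimates are careful but routine.
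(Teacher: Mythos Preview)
Your high-level plan --- build a uniformly convergent sequence of piecewise-linear minimal valid functions, take the limit, check minimality, then use the Facet Theorem plus the Interval Lemma --- is exactly the paper's plan. But the construction you sketch diverges from the paper's in a way that matters, and it rests on a false premise.

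You write that ``non-piecewise-linearity demands infinitely many slopes'' and design each stage to introduce ``a fresh slope''. This is not true under the paper's definition of piecewise linear (finitely many pieces): a function can fail to be piecewise linear by having infinitely many \emph{pieces} while using only a single slope value. That is precisely what the paper does. Every approximant $\psi_i$ is a \emph{two-slope} function: all negative-slope segments have slope $-\tfrac{1}{1-\alpha}$, and all positive-slope segments share one common slope (which varies with $i$). Going from $\psi_i$ to $\psi_{i+1}$, each positive segment is split into three pieces by inserting a small negative segment of the \emph{same} slope $-\tfrac{1}{1-\alpha}$; no new slope value is introduced at any finite stage. In the limit, the negative-slope set $S$ is a dense open subset of $[0,1]$ (its complement is a fat Cantor-type set), so $\psi$ has infinitely many linear pieces but essentially one slope on $S$ --- that is why it is not piecewise linear.

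This two-slope discipline is not cosmetic; it is what makes both halves of the verification go through. For subadditivity, the paper exploits a recursive self-similarity (Lemma~\ref{lem:recursive lemma}): on $[0,\alpha+\epsilon_1]$, $\psi_i$ equals a linear function plus a scaled copy of $\psi_{i-1}$ with new parameters, so subadditivity is proved by induction on $i$ and passes to the limit. For facetness, the fact that $\psi$ has one fixed slope on each component of $S$ lets the Interval Lemma pin down any competitor $\phi$ on every such component, and an induction on the ``birth index'' of each negative segment propagates the exact values $\phi=\psi$ across all of $S$. Your alternative, with genuinely distinct slopes accumulating, would forfeit the Two-Slope Theorem for the approximants, lose the clean recursion for subadditivity, and make the Interval-Lemma bookkeeping for facetness far less transparent.

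One further gap: in your facet argument you invoke ``continuity of $\pi^*$'' to propagate across the accumulation point. A competing minimal valid function need not be continuous. The paper avoids this: after showing $\phi=\psi$ on the dense set $S$, it uses only \emph{subadditivity of $\phi$} and \emph{continuity of $\psi$} to get $\phi\le\psi$ everywhere, and then symmetry of both functions forces equality. You should not assume continuity of the competitor.
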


We show that the above conjecture is false by exhibiting a facet for
(\ref{SI}) that is  continuous but not piecewise linear.

\subsection{Literature overview}

The definition of valid function we stated, which is the one given
by  Gomory and Johnson in~\cite{gj}, differs from the one adopted
later by the same two authors in~\cite{sc}, where they included in
the definition the further assumption that the function $\pi$ be
continuous. So a ``valid function'' in~\cite{sc} corresponds to a
``continuous valid function'' from ~\cite{gj} and the present paper.
Dey et al.~\cite{DRLM} show that there are facets that are not
continuous.

The definition of facet given in our paper is identical to the one given by Gomory and Johnson in~\cite{sc}, with the caveat that in~\cite{sc} valid functions are required to be continuous. Thus, given a continuous valid function $\pi$, $\pi$ would be a facet according to the definition in~\cite{sc} if and only if $P(\pi^*)\supset P(\pi)$ implies $\pi^* = \pi$ for every {\em continuous} valid function $\pi^*$. For clarity, we refer to a continuous valid function satisfying the latter property as a {\em facet in the sense of Gomory-Johnson}. By definition, if a continuous valid function is a facet, it is also a facet in the sense of Gomory-Johnson. Thus, the conjecture they state in~\cite{sc} is actually that {\em every facet in the sense of Gomory-Johnson is piecewise linear}. The above discussion shows that a counterexample to Conjecture~\ref{conj} also disproves the conjecture in~\cite{sc}.
\smallskip

In earlier work, Gomory and Johnson~\cite{gj} emphasized extreme
functions rather than facets. A valid function $\pi$ is {\em
extreme} if it cannot be expressed as a convex combination of two
distinct valid functions. It follows from the definition that facets
are extreme. Therefore our counterexample also provides an extreme
function that is continuous but not piecewise linear.
%This is in contrast with
%the result of Borozan and Cornu\'ejols \cite{bc} showing that the
%extreme valid functions for a continuous version of (\ref{SI}) are always
%piecewise linear.
Gomory and Johnson~\cite{sc} write in a footnote that the definition
of facet ``is different from, although eventually equivalent to,''
the definition of extreme function. The statement that extreme
functions are facets appears to be quite nontrivial to prove, and to
the best of our knowledge there is no proof in the literature. We
therefore cautiously treat extreme functions and facets as distinct
concepts, and leave their equivalence as an open question.

We obtain the counterexample to Conjecture~\ref{conj} by exhibiting
a sequence of piecewise linear functions and then considering the
pointwise limit of this sequence of functions. These functions were 
discovered by Kianfar and Fathi~\cite{kf} who show that they are
facets for the infinite group problem. They call them the $n$-step MIR (Mixed-Integer Rounding) functions in their paper. Our treatment and analysis in this paper is different from theirs. The emphasis in~\cite{kf} was on deriving valid inequalities for MILPs which are generalizations of the standard MIR inequalities. In this paper, we
use this class of functions primarily to construct a
counterexample to Conjecture~\ref{conj}.

\section{Preliminaries}

A valid function $\pi : \; [0,1[ \; \rightarrow \mathbb{R}$ is {\em
minimal} if there is no valid function $\pi'$ such that 1) $\pi'(a)
\leq \pi (a)$ for all $a \in [0,1[$, and 2) the inequality is strict
for at least one $a$. If $\pi$ is a minimal valid function, then $\pi(r)\leq 1$ for every $r\in [0,1[$, as follows immediately from the fact that $\pi$ is nonnegative.

When convenient, we will extend the domain of definition of the
function $\pi$ to the whole real line $\mathbb{R}$ by making the
function periodic: $\pi (x) = \pi (x + k)$ for any $x \in [ 0,1 [$
and $k \in \mathbb{Z}$.

A function $\pi : \; \mathbb{R} \; \rightarrow \mathbb{R}$ is  {\em
subadditive} if for every $a,b \in \mathbb{R}$
\[
\pi(a + b) \leq \pi(a) + \pi(b).
\]

Given $f \in ] 0, 1 [$, a function $\pi : \; [0,1[ \; \rightarrow
\mathbb{R}$ is {\em symmetric} if for every $a \in [ 0,1 [$
\[
\pi(a)+\pi(f-a)=1.
\]

Gomory and Johnson prove the following result in \cite{gj}.

\begin{theorem}[Minimality Theorem]\label{thm:gj}
Let $\pi : \; [0,1[ \; \rightarrow \mathbb{R}$ be such that
$\pi(0)=0$ and $\pi(f)=1$. A necessary and sufficient condition for
$\pi$ to be valid and minimal is that $\pi$ is subadditive and
symmetric.
\end{theorem}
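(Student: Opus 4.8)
The plan is to prove the two implications separately, using throughout the standing hypotheses $\pi(0)=0$, $\pi(f)=1$ and the periodic extension of $\pi$, so that all sums of arguments below are understood modulo $1$.

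\smallskip
\noindent\textbf{Sufficiency (subadditive and symmetric $\Rightarrow$ valid and minimal).} I would first check $\pi\ge0$, which is needed for $\pi$ to qualify as a valid function and, together with symmetry, also gives $\pi\le1$. For $a$ of finite order $q$ in $\R/\Z$ this is immediate: $0=\pi(0)=\pi(qa)\le q\pi(a)$. For general $a$, iterating subadditivity gives $1=\pi(f)\le k\pi(a)+\pi(f-ka)$ for all $k\ge1$, and substituting the symmetry identity $\pi(f-ka)=1-\pi(ka)$ forces $\pi(a)\ge0$ as soon as $\pi$ is bounded (automatic for the continuous functions relevant to this paper). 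Granting $\pi\ge0$, validity is immediate: for any solution $s$ of \eqref{SI}, iterating subadditivity over its $\sum_r s_r$ unit moves yields $\sum_r\pi(r)s_r\ge\pi\bigl(\sum_r rs_r\bigr)=\pi(f)=1$. For minimality, let $\pi'$ be any valid function with $\pi'\le\pi$; applying the validity of $\pi'$ to the two-term solution supported on $\{a,f-a\}$ (or the doubled one-term solution when $a=f-a$) gives $\pi'(a)+\pi'(f-a)\ge1$, hence $\pi'(a)\ge1-\pi'(f-a)\ge1-\pi(f-a)=\pi(a)$ by symmetry of $\pi$; combined with $\pi'\le\pi$ this forces $\pi'=\pi$.

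\smallskip
\noindent\textbf{Necessity of subadditivity.} Now suppose $\pi$ is valid and minimal, so $0\le\pi\le1$. I would introduce the subadditive closure $\hat\pi(a):=\inf\bigl\{\sum_r t_r\pi(r)\bigr\}$, where the infimum is over finitely supported $t\in\Z_+^{[0,1[}$ with $\sum_r rt_r=a$. The representations $t=0$ and $t=e_a$ give $0\le\hat\pi(a)\le\pi(a)$, so $\hat\pi$ is well defined, real valued, and $\hat\pi(0)=0$; concatenating a representation of $a$ with one of $b$ shows $\hat\pi$ is subadditive; and since every $t$ representing $f$ is itself a solution of \eqref{SI}, validity of $\pi$ forces $\hat\pi(f)\ge1$, whence $\hat\pi(f)=1$. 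Thus $\hat\pi$ is a valid function (it is nonnegative, vanishes at $0$, and $\sum_r\hat\pi(r)s_r\ge\hat\pi(f)=1$ for every solution $s$ by subadditivity of $\hat\pi$) with $\hat\pi\le\pi$, so minimality of $\pi$ gives $\pi=\hat\pi$; in particular $\pi$ is subadditive.

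\smallskip
\noindent\textbf{Necessity of symmetry.} Still with $\pi$ valid and minimal, applying validity to the two-term solution on $\{a,f-a\}$ gives $\pi(a)+\pi(f-a)\ge1$ for every $a$; hence $\pi^\ast\le\pi$, where $\pi^\ast(a):=1-\pi(f-a)$, and one also has $\pi^\ast(0)=1-\pi(f)=0$ and $\pi^\ast\ge0$ (equivalent to $\pi\le1$). The plan is to prove that $\pi^\ast$ is itself a valid function; minimality of $\pi$ then forces $\pi^\ast=\pi$, which is precisely the symmetry identity. I expect the verification that $\pi^\ast$ is valid to be the main obstacle: it amounts to showing $\sum_r\pi(f-r)s_r\le\bigl(\sum_r s_r\bigr)-1$ for every solution $s$ of \eqref{SI}, and the naive estimates are not sharp enough --- $\pi(f-r)\le1$ only gives $\sum_r\pi(f-r)s_r\le\sum_r s_r$, while writing $v_1,\dots,v_S$ for the unit moves of $s$ and using $f-v_j=\sum_{i\ne j}v_i$ with subadditivity gives $\sum_j\pi(f-v_j)\le(S-1)\sum_r\pi(r)s_r$, which suffices only when $s$ is tight for $\pi$. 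Handling the non-tight case will require a more careful argument that also exploits the subadditivity of $\pi$ and the identity $\pi=\hat\pi$ obtained above; this is the step I would budget the most effort for.
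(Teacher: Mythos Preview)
The paper does not contain a proof of Theorem~\ref{thm:gj}. The statement is quoted as a result of Gomory and Johnson with a reference to~\cite{gj}, and no argument is supplied in the paper itself. There is therefore nothing in the paper to compare your proposal against.

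Evaluated on its own, your write-up is an outline with gaps that you yourself identify, not a proof. Two points deserve emphasis. First, in the sufficiency direction, your derivation of $\pi\ge 0$ from subadditivity and symmetry quietly imports a boundedness hypothesis (``automatic for the continuous functions relevant to this paper''); but continuity is not among the hypotheses of the theorem as stated, so this step is not justified as written. Second, and more seriously, in the necessity direction you correctly isolate the crux---showing that $\pi^\ast(a)=1-\pi(f-a)$ is itself valid, equivalently that $\sum_r\pi(f-r)s_r\le\bigl(\sum_r s_r\bigr)-1$ for every solution $s$---and you correctly observe that neither the bound $\pi\le 1$ nor the averaging estimate $\sum_j\pi(f-v_j)\le (S-1)\sum_r\pi(r)s_r$ closes the gap for non-tight solutions. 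You then stop, budgeting ``the most effort'' for this step without indicating what the missing idea is. As it stands, the symmetry half of the necessity direction is left open, so the proposal is incomplete.
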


Any facet for (\ref{SI}) is minimal. Therefore if $\pi$ is a facet
for (\ref{SI}), then $\pi$ is subadditive and symmetric.
The following two facts are well-known and will be useful in our
arguments.

\begin{fact}\label{fact:subadd+linear}
If $\pi_1$ and $\pi_2$ are subadditive, then $\pi_1 + \pi_2$ is
subadditive.
\end{fact}

%\begin{proof}
%$(\psi_1 + \psi_2)(a + b) = \psi_1 (a+b) + \psi_2(a+b) \leq
%\psi_1(a) + \psi_1(b) + \psi_2(a) + \psi_2(b) = (\psi_1 + \psi_2)(a)
%+ (\psi_1 + \psi_2)(a)$.
%\end{proof}

\begin{fact}\label{fact:scaling_subadd}
Let $\pi$ be a subadditive function and define $\pi'(x) =
\alpha\pi(\beta x)$ for some constants $\alpha > 0$ and $\beta$.
Then $\pi'$ is subadditive.
\end{fact}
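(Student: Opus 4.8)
The plan is to verify the subadditivity inequality for $\pi'$ directly from the definition, reducing it to the subadditivity of $\pi$ evaluated at scaled arguments. Fix arbitrary $a,b\in\mathbb{R}$. Applying the definition of $\pi'$ and distributing the scalar $\beta$ over the sum gives $\pi'(a+b)=\alpha\pi(\beta(a+b))=\alpha\pi(\beta a+\beta b)$. Since $\pi$ is subadditive, $\pi(\beta a+\beta b)\le\pi(\beta a)+\pi(\beta b)$, and because $\alpha>0$ multiplication by $\alpha$ preserves this inequality, so $\pi'(a+b)\le\alpha\pi(\beta a)+\alpha\pi(\beta b)=\pi'(a)+\pi'(b)$. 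As $a,b$ were arbitrary, this is exactly the statement that $\pi'$ is subadditive.

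The only points worth a moment's care are the roles of the two constants. We need $\alpha>0$ precisely so that scaling the displayed inequality by $\alpha$ does not reverse its direction; for negative $\alpha$ the conclusion would indeed fail. By contrast, $\beta$ may be an arbitrary real number — including zero or a negative value — since $\pi$ is assumed to be defined on all of $\mathbb{R}$ (by the periodic extension discussed earlier), so the quantities $\pi(\beta a)$ and $\pi(\beta b)$ are always well defined and no case analysis on the sign of $\beta$ is needed. Consequently there is no real obstacle here: the statement is an immediate consequence of the definition of subadditivity together with the positivity of $\alpha$.
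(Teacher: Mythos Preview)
Your proof is correct and follows exactly the same direct verification as the paper's one-line proof, simply unpacking the chain $\pi'(a+b)=\alpha\pi(\beta a+\beta b)\leq\alpha(\pi(\beta a)+\pi(\beta b))=\pi'(a)+\pi'(b)$. Your additional remarks on why $\alpha>0$ is needed and why $\beta$ is unconstrained are accurate and add useful commentary, but the core argument is identical.
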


\begin{proof}
$\pi'(a+b) = \alpha\pi(\beta(a+b)) \leq \alpha(\pi(\beta
a)+\pi(\beta b)) = \pi'(a) + \pi'(b)$.
\end{proof}

A function $\pi'$ defined as $\pi'(x)= \alpha\pi(\beta x)$ will be
referred to as a \emph{scaling} of $\pi$.
\medskip

Let $E(\pi)$ denote the set of all possible of inequalities
$\pi(u_1) + \pi(u_2) \geq \pi(u_1 + u_2)$ that are satisfied as
\emph{equalities} by $\pi$. Here $u_1$ and $u_2$ are any real
numbers. The following theorem is proved in Gomory and
Johnson~\cite{sc} and is used in this paper to prove that certain
inequalities are facets.

\begin{theorem}[Facet Theorem]\label{thm:facet_thm}
Let $\pi$ be a minimal valid function. If there is no minimal valid
function that satisfies the equalities in $E(\pi)$ other than $\pi$
itself, then $\pi$ is a facet.
\end{theorem}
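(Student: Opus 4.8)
I would take an arbitrary valid function $\pi^*$ with $P(\pi^*)\supseteq P(\pi)$ and show $\pi^*=\pi$; by the definition of facet this suffices. The whole argument is a matter of manufacturing, for each property of $\pi^*$ we need, an explicit feasible solution that is tight for $\pi$ — hence, by the containment $P(\pi^*)\supseteq P(\pi)$, tight for $\pi^*$ as well — and reading that property off from the tightness. (All additions of indices are mod $1$, and coordinates are merged when points coincide; the resulting degenerate reads, e.g.\ when one of the relevant points is $0$ or two of them are equal, are routine and I will not dwell on them.)

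First I would fix $\pi^*$ on $f$, establish symmetry of $\pi^*$, and then subadditivity, so that Theorem~\ref{thm:gj} upgrades ``$\pi^*$ valid'' to ``$\pi^*$ minimal valid.'' The one-term solution $s_f=1$ is feasible and has $\sum_t\pi(t)s_t=\pi(f)=1$ (here $\pi(f)=1$ since $\pi$ is minimal, by Theorem~\ref{thm:gj}), so it lies in $P(\pi)\subseteq P(\pi^*)$ and forces $\pi^*(f)=1$. For each $r\in[0,1[$, the solution $s_r=s_{f-r}=1$ is feasible because $r+(f-r)\equiv f$, and has $\pi$-value $\pi(r)+\pi(f-r)=1$ by symmetry of $\pi$; hence it lies in $P(\pi^*)$ and gives $\pi^*(r)+\pi^*(f-r)=1$, i.e.\ $\pi^*$ is symmetric. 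Next, for $a,b\in[0,1[$ with $c:=a+b$, feasibility of $s_a=s_b=s_{f-c}=1$ together with validity of $\pi^*$ gives $\pi^*(a)+\pi^*(b)+\pi^*(f-c)\geq 1$; subtracting the symmetry identity $\pi^*(c)+\pi^*(f-c)=1$ yields $\pi^*(a)+\pi^*(b)\geq\pi^*(a+b)$, and hence (using periodicity) $\pi^*$ is subadditive. With $\pi^*(0)=0$ from the definition of valid function and $\pi^*(f)=1$, Theorem~\ref{thm:gj} now shows $\pi^*$ is a \emph{minimal} valid function.

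It remains to prove $E(\pi)\subseteq E(\pi^*)$, after which the hypothesis of the theorem immediately gives $\pi^*=\pi$. Fix $(u_1,u_2)\in E(\pi)$ and put $v:=u_1+u_2$, so $\pi(u_1)+\pi(u_2)=\pi(v)$. The solution $s_{u_1}=s_{u_2}=s_{f-v}=1$ is feasible and has $\pi$-value $\pi(u_1)+\pi(u_2)+\pi(f-v)=\pi(v)+\pi(f-v)=1$, so it lies in $P(\pi)\subseteq P(\pi^*)$; the solution $s_v=s_{f-v}=1$ likewise lies in $P(\pi^*)$. Subtracting the two equalities these memberships impose on $\pi^*$ gives $\pi^*(u_1)+\pi^*(u_2)=\pi^*(v)=\pi^*(u_1+u_2)$, i.e.\ $(u_1,u_2)\in E(\pi^*)$, as wanted. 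The step I expect to be the real obstacle is conceptual rather than computational: recognizing — and then making precise, degenerate cases included — that the solution-set hypothesis $P(\pi^*)\supseteq P(\pi)$ can be converted, one additivity relation at a time, into $E(\pi^*)\supseteq E(\pi)$ by always completing a tight configuration with its symmetry partner $f-v$; a secondary point requiring care is that $\pi^*$ must be shown minimal before the hypothesis of the theorem, which only speaks of minimal valid functions, can be invoked.
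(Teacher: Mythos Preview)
Your argument is correct and is essentially the standard proof of the Facet Theorem. Note, however, that the present paper does not give its own proof of this statement: it quotes the theorem from Gomory and Johnson~\cite{sc} and only remarks that the continuity assumption made there is not needed. So there is no in-paper proof to compare against; what you have written is a faithful reconstruction of the Gomory--Johnson argument --- produce tight solutions of the form $s_r=s_{f-r}=1$ to force symmetry of $\pi^*$, use validity plus symmetry to get subadditivity, invoke Theorem~\ref{thm:gj} to upgrade $\pi^*$ to a minimal valid function, and then convert each additivity relation in $E(\pi)$ into one in $E(\pi^*)$ by completing with the symmetry partner $s_{f-v}$. The degenerate cases you flag (coinciding indices, an index equal to $0$ or $f$) are indeed routine.
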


We remark that, even though Theorem~\ref{thm:facet_thm} is proved in~\cite{sc} under the assumptions that valid functions are continuous, the continuity assumption is not needed in the proof, thus the statement remains true even in the setting of the present paper.

In the paper we need the following lemma, which is a variant of the Interval Lemma stated in
Gomory and Johnson~\cite{sc}. They prove the lemma under the assumption that the function in the statement is continuous, whereas we only require the function to be bounded one every interval. Other variants of the Interval Lemma that do not require the function to be continuous have been given by Dey et al.~\cite{DRLM}. The proof we give is in the same spirit of the solution of Cauchy's Equation~(see for example Chapter 2 of Acz\'el~\cite{acz}).

\begin{lemma}[Interval Lemma] \label{interval_lemma} Let $\pi : \; \R \; \rightarrow
\mathbb{R}$ be a function bounded on every bounded interval. Given real numbers $u_1 < u_2$ and $v_1 < v_2$, let $U = [u_1, u_2]$, $V =
[v_1, v_2]$, and $U + V = [u_1 + v_1, u_2 + v_2]$.\\ If $\pi(u)+\pi(v) = \pi(u+v)$ for every
$u \in U$ and $v \in V$, then there exists $c\in \R$ such that $\pi(u)=\pi(u_1)+c(u-u_1)$ for every $u\in U$, $\pi(v)=\pi(v_1)+c(v-v_1)$ for every $v\in V$, $\pi(w)=\pi(u_1+v_1)+c(w-u_1-v_1)$ for every $w\in U+V$.
\end{lemma}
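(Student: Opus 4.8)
The plan is to mimic the classical solution of Cauchy's functional equation, adapted to the fact that we only know the additivity relation on a product of intervals. First I would reduce to a cleaner setting. Define $\phi(u) = \pi(u_1 + u) - \pi(u_1)$ for $u \in [0, u_2 - u_1]$ and $\psi(v) = \pi(v_1 + v) - \pi(v_1)$ for $v \in [0, v_2 - v_1]$. The hypothesis $\pi(u) + \pi(v) = \pi(u+v)$ for all $u \in U, v \in V$, evaluated at the corner $(u_1, v_1)$ and at a general point, yields $\phi(u) + \psi(v) = \pi(u_1 + v_1 + u + v) - \pi(u_1 + v_1)$ for all such $u, v$; call the right-hand side $\chi(u+v)$, a function on $[0, (u_2-u_1)+(v_2-v_1)]$ with $\chi(0)=0$. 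So the three functions are tied together by $\phi(u) + \psi(v) = \chi(u+v)$, and all three are bounded on their (bounded) domains since $\pi$ is. Setting $v=0$ gives $\phi(u) = \chi(u)$ on $[0,u_2-u_1]$, and $u=0$ gives $\psi(v) = \chi(v)$ on $[0,v_2-v_1]$; hence the single relation to exploit is
\[
\chi(u) + \chi(v) = \chi(u+v) \qquad \text{whenever } u \in [0,u_2-u_1],\ v\in[0,v_2-v_1].
\]

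Next I would extract additivity on a genuine interval. From the displayed relation one gets, for $u, u'$ both in $[0, \min(u_2-u_1, v_2-v_1)]$ with $u + u'$ still in range, that $\chi(u+u') = \chi(u) + \chi(u')$; iterating, $\chi$ restricted to a small interval $[0,\delta]$ is "midpoint-Cauchy", and by the standard dyadic argument $\chi(qu) = q\chi(u)$ for all dyadic rationals $q \in [0,1]$ and $u \in [0,\delta]$. Boundedness of $\chi$ on $[0,\delta]$ then upgrades this to genuine linearity: the classical argument shows that a solution of Cauchy's equation that is bounded on an interval is linear there, so $\chi(u) = c u$ on $[0,\delta]$ for some constant $c$. (Concretely: if $\chi(t) - ct$ were nonzero at some point, dyadic scaling would make it unbounded, contradiction.) This is the step I expect to be the main obstacle — making the boundedness-to-linearity passage fully rigorous while only ever using the additivity relation on the allowed product-of-intervals domain, i.e. being careful that all the points $qu$, $u+u'$, etc. that I feed into the relation actually lie in the sets where additivity is known.

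Finally I would propagate linearity from $[0,\delta]$ to the whole of each interval. Given $\chi(u) = cu$ for $u \in [0,\delta]$ and the relation $\chi(u) + \chi(v) = \chi(u+v)$ on the product domain, for any $w$ in the range of $\chi$ I can write $w$ as a sum $u + v$ with $u$ small (in $[0,\delta] \cap [0,v_2-v_1]$) and $v$ in the domain of the relation, obtaining $\chi(w) = \chi(v) + cu$; a short induction on the number of steps of size $\le \delta$ needed to reach $w$ gives $\chi(w) = cw$ on all of $[0,(u_2-u_1)+(v_2-v_1)]$. Unwinding the substitutions, $\pi(u) = \pi(u_1) + c(u - u_1)$ on $U$, $\pi(v) = \pi(v_1) + c(v - v_1)$ on $V$, and $\pi(w) = \pi(u_1+v_1) + c(w - u_1 - v_1)$ on $U+V$, with the same constant $c$ throughout since all three arose from the single function $\chi$. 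This completes the proof.
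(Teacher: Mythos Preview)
Your plan is sound and lands on the same conclusion as the paper, but the organization is genuinely different. You first normalize to a single function $\chi$ on $[0,(u_2-u_1)+(v_2-v_1)]$ satisfying $\chi(u)+\chi(v)=\chi(u+v)$ on the rectangle $[0,u_2-u_1]\times[0,v_2-v_1]$, then prove linearity on a small interval and propagate. The paper instead works directly with $\pi$ on $U$ and $V$: it first proves a translation identity $\pi(u+p\varepsilon)-\pi(u)=p\bigl(\pi(v_1+\varepsilon)-\pi(v_1)\bigr)$ (its Claim~1), deduces that $\pi$ has a fixed difference quotient on pairs in $U$ with rational gap (Claim~2), and then uses boundedness to rule out any deviation from affinity (Claim~3), finishing by symmetry between $U$ and $V$. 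Your reduction buys cleaner bookkeeping (one function instead of three) and makes the propagation step transparent; the paper's version avoids the change of variables and phrases the boundedness contradiction via density of rational translates rather than an explicit iteration.

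One caution on the step you correctly flagged as the main obstacle: the parenthetical ``dyadic scaling would make it unbounded'' is not right as stated. Halving $t$ halves the deviation $\chi(t)-ct$, and doubling $t$ may exit $[0,\delta]$. What does work, and is the analogue of the paper's Claim~3, is this: set $g=\chi-c\cdot\mathrm{id}$ with $c=\chi(\delta)/\delta$, so $g(\delta)=0$; additivity then gives $g(w)=g(w-\delta)$ for $w\in[\delta,2\delta]$. Now iterate $t_{n+1}=t_n+t$ if $t_n+t\le\delta$ and $t_{n+1}=t_n+t-\delta$ otherwise; in either case $t_{n+1}\in[0,\delta]$ and $g(t_{n+1})=g(t_n)+g(t)$, so $g(t_n)=n\,g(t)$ is unbounded on $[0,\delta]$ unless $g(t)=0$. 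With this in place your argument is complete.
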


\begin{proof} We first show the following.
\medskip

\noindent{\em Claim 1. Let $u \in U$,  and let $\varepsilon>0$ such that $v_1+\varepsilon\in V$. For every nonnegative integer $p$ such that $u+p\varepsilon\in U$, we have $\pi(u + p\varepsilon) - \pi(u) = p(\pi(v_1 + \varepsilon) -
\pi(v_1))$.}
\medskip

For $h=1\ldots, p$, by hypothesis $\pi(u + h\varepsilon) + \pi(v_1) =
\pi(u + h\varepsilon + v_1) = \pi(u+(h-1)\varepsilon) + \pi( v_1+
\varepsilon)$. Thus $\pi(u + h\varepsilon) - \pi(u+(h-1)\varepsilon) = \pi(v_1 + \varepsilon) -
\pi(v_1)$, for $h=1,\ldots,p.$
By summing the above $p$ equations, we obtain $\pi(u + p\varepsilon) - \pi(u) = p(\pi(v_1 + \varepsilon) -
\pi(v_1))$. This concludes the proof of Claim~1.\medskip

Let $\bar u, \bar u'\in U$ such that $\bar u-\bar u'\in \Q$ and
$\bar u>\bar u'$.  Define $c:=\frac{\pi(\bar u)-\pi(\bar u')}{\bar
u-\bar u'}$.
\bigskip

\noindent{\em Claim 2. For every $u,u'\in U$ such that $u-u'\in \Q$,
we have $\pi(u)-\pi(u')=c(u-u')$. }\medskip

 We only need to show that, given $u,u'\in U$
such that $u-u'\in \Q$, we have $\pi(u)-\pi(u')=c(u-u')$. We may
assume $u>u'$. Choose a positive rational $\varepsilon$ such that
$\bar u-\bar u'=\bar p \varepsilon$ for some integer $\bar p$, $ u-
u'= p \varepsilon$ for some integer $ p$, and $v_1+\varepsilon \in
V$. By Claim~1,
$$\pi(\bar u)-\pi(\bar u')=\bar p (\pi(v_1 + \varepsilon) -
\pi(v_1)) \quad \mbox{
and
}\quad \pi( u)-\pi( u')= p (\pi(v_1 + \varepsilon) -
\pi(v_1)).$$
Dividing the last equality by $u-u'$ and the second to last by $\bar u-\bar u'$, we get
$$\frac{\pi(v_1 + \varepsilon) -
\pi(v_1)}{\varepsilon}=\frac{\pi(\bar u)-\pi(\bar u')}{\bar u-\bar u'}=\frac{\pi( u)-\pi( u')}{ u- u'}=c.$$
Thus $\pi(u)-\pi(u')=c(u-u')$. This concludes the proof of Claim~2.
\bigskip

\noindent{\em Claim 3. For every $u\in U$, $\pi(u) = \pi(u_1) + c
(u-u_1)$.}
\medskip

Let $\delta(x) = \pi(x) - c  x$. We show that  $\delta(u) =
\delta(u_1)$ for all $u \in U$ and this proves the claim.  Since
$\pi$ is bounded on every bounded interval, $\delta$ is bounded over
$U, V$ and $U+V$. Let $M$ be a number such that $|\delta(x)| \leq M$
for all $x\in U\cup V\cup (U+V)$.

Suppose by contradiction that, for some $u^*\in U$, $\delta(u^*)
\neq \delta(u_1)$. Let $N$ be a positive integer such that
$|N(\delta(u^*)-\delta(u_1))| > 2M$.\\ By Claim~2, $\delta(u^*) =
\delta(u)$ for every $u\in U$ such that $u^*-u$ is rational. Thus
there exists  $\bar u$ such  that $\delta(\bar u)=\delta(u^*)$, $u_1
+ N(\bar u - u_1) \in U$ and $v_1 + \bar u - u_1 \in V$. Let $\bar u
- u_1 = \varepsilon$. By Claim~1,
\[
\delta(u_1 + N\varepsilon) - \delta(u_1)
=N(\delta(v_1+\varepsilon)-\delta(v_1)=N(\delta(u_1 + \varepsilon) -
\delta(u_1) ) =N(\delta(\bar u)-\delta(u_1))
\]
Thus $|\delta(u_1 + N\varepsilon) - \delta(u_1)| =  |N(\delta(\bar
u)-\delta(u_1))|=|N(\delta(u^*)-\delta(u_1))| > 2M$, which implies
$|\delta(u_1 + N\varepsilon)|+ |\delta(u_1)|>2M$, a contradiction.
This concludes the proof of Claim~3.
\bigskip

By symmetry between $U$ and $V$, Claim~3 implies  that there exists
some constant $c'$ such that, for every $v\in V$,
$\pi(v)=\pi(v_1)+c'(v-v_1)$. We show $c'=c$. Indeed, given
$\varepsilon>0$ such that $u_1+\varepsilon\in U$ and
$v_1+\varepsilon\in V$, $c\varepsilon=\pi(u_1 + \varepsilon) -
\pi(u_1) = \pi(v_1 + \varepsilon) -\pi(v_1)=c'\varepsilon$, where
the second equality follows from Claim~1.\\ Therefore, for every
$v\in V$, $\pi(v) = \pi(v_1) + c  \pi(v-v_1)$. Finally, since
$\pi(u)+\pi(v)=\pi(u+v)$ for every $u\in U$ and $v\in V$, for every
$w \in U+V$, $\pi(w) = \pi(u_1+v_1) + c  (w-u_1-v_1)$. \end{proof}

The following theorem, due to Gomory and Johnson~\cite{sc}, gives a
class of facets.

\begin{theorem}\label{thm:2-slopes} Let $\pi : \; [0,1[ \; \rightarrow \mathbb{R}$
be a minimal valid function that is piecewise linear. If $\pi$ has
only two distinct slopes, then $\pi$ is a facet.
\end{theorem}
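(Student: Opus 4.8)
The plan is to use the Facet Theorem (Theorem~\ref{thm:facet_thm}): since $\pi$ is already minimal valid, it suffices to show that any minimal valid function $\pi^*$ satisfying all the equalities in $E(\pi)$ must equal $\pi$. So I would start by fixing an arbitrary minimal valid $\pi^*$ with $E(\pi^*) \supseteq E(\pi)$, and aim to pin down $\pi^*$ completely from the structure forced by those equalities.

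First I would exploit the two-slopes structure of $\pi$. Let the two slopes be $s_1 > s_2$; write $[0,1[$ as a union of the maximal intervals on which $\pi$ is affine, and classify them as ``positive-slope'' or ``negative-slope'' intervals. The key combinatorial observation is that many additivity relations $\pi(u) + \pi(v) = \pi(u+v)$ hold in $E(\pi)$: roughly, whenever the affine pieces line up so that the slope is ``conserved.'' In particular, for each positive-slope interval $U$ there should be a corresponding interval $V$ with $U + V$ again landing inside a single affine piece, and then $E(\pi)$ contains the product set of equalities $\{\pi(u)+\pi(v)=\pi(u+v): u\in U, v\in V\}$. The heart of the argument is then to apply the Interval Lemma (Lemma~\ref{interval_lemma}) to $\pi^*$ on each such triple $(U,V,U+V)$: since $\pi^*$ is minimal valid it is bounded (indeed $0 \le \pi^* \le 1$), so the lemma applies and yields that $\pi^*$ is affine on each of $U$, $V$, $U+V$, with a common slope $c$ that is forced to be the \emph{same} across overlapping intervals.

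Next I would propagate this. Using the overlaps between the various intervals $U$, $V$, $U+V$ arising as $U$ ranges over all affine pieces of $\pi$, one concludes that $\pi^*$ is piecewise linear with breakpoints among those of $\pi$, and moreover that $\pi^*$ has the same slope as $\pi$ on each piece --- the ``same slope'' forcing coming from chaining the common-$c$ conclusions of the Interval Lemma around the circle $[0,1[$ (the two slopes $s_1, s_2$ of $\pi$ are determined by the requirement that the lengths of the positive and negative pieces and the total rise over a period are fixed, so once $\pi^*$ is piecewise linear on the same partition with the additivity relations holding, its slopes must coincide with those of $\pi$). Together with the normalization $\pi^*(0) = \pi(0) = 0$ (from minimality: $\pi^*(0)=0$), having the same slopes on the same intervals and the same value at $0$ forces $\pi^* = \pi$ pointwise, as desired. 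By the Facet Theorem, $\pi$ is a facet.

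The main obstacle I anticipate is the bookkeeping in the middle step: verifying that $E(\pi)$ genuinely contains enough full two-dimensional families of additivity equalities (product sets $U\times V$ of positive length, not just isolated points) to feed the Interval Lemma, and then organizing the overlaps so that the locally-constant slope $c$ is shown to be globally constant and equal to the right one of $s_1, s_2$ on each piece. This requires a careful case analysis of how the affine pieces of a two-slope minimal valid function fit together under addition modulo $1$, using symmetry $\pi(a) + \pi(f - a) = 1$ and subadditivity to locate the tight combinations; the Interval Lemma itself, once set up, does the analytic work for free.
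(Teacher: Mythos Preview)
Your sketch follows exactly the approach the paper points to. The paper does not give a self-contained proof of Theorem~\ref{thm:2-slopes}; it cites Gomory--Johnson~\cite{sc} and remarks only that their argument uses continuity solely through the Interval Lemma, so it goes through verbatim once Lemma~\ref{interval_lemma} (which needs only boundedness, and minimal valid functions are bounded) is substituted. Your plan---apply the Facet Theorem, feed the additivity relations on product sets $U\times V$ of affine pieces into the Interval Lemma to force $\pi^*$ to be affine with the right slopes on each piece, then propagate using $\pi^*(0)=0$ and $\pi^*(f)=1$ to get $\pi^*=\pi$---is precisely the Gomory--Johnson argument the paper is invoking.
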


Again, we note that Theorem~\ref{thm:2-slopes} is proved
in~\cite{sc} under the assumptions that valid functions are
continuous. However, in~\cite{sc} the continuity assumption is used
in the proof only when applying the Interval Lemma. Since our
version of the Interval Lemma (Lemma~\ref{interval_lemma}) applies
to any bounded function, and since minimal valid functions are
bounded, Theorem~\ref{thm:2-slopes} is valid also in the setting of
the present paper. Gomory and Johnson give in~\cite{gj2} a very
similar statement to the one of Theorem~\ref{thm:2-slopes}, namely
that piecewise linear minimal valid functions with only two distinct
slopes are extreme.

\section{The construction}
\label{sec:constr}

%We know that every valid inequality is periodic with the unit
%interval as the period.
We first define a sequence of valid functions $\psi_i : \; [0,1[ \;
\rightarrow \mathbb{R}$ that are piecewise linear, and then consider
the limit $\psi$ of this sequence. We will then show that $\psi$ is
a facet but not piecewise linear.

Let $0 < \alpha < 1$. $\psi_0$ is the triangular function given by

\[
\psi_0(x) = \left\{
\begin{array}{ll}
\frac{1}{\alpha}x & 0 \leq x  \leq \alpha \\
\frac{1-x}{1-\alpha} & \alpha \leq x  < 1.
\end{array}\right.
\]

Notice that the corresponding inequality
$\sum_{r\in[0,1[}\psi_0(r)s_r\geq 1$ defines the Gomory
mixed-integer inequality if one views (\ref{SI}) as a relaxation of
the simplex tableau of an integer program.

\begin{figure}[htbp]
\begin{center}
\scalebox{.4}{\epsfig{file=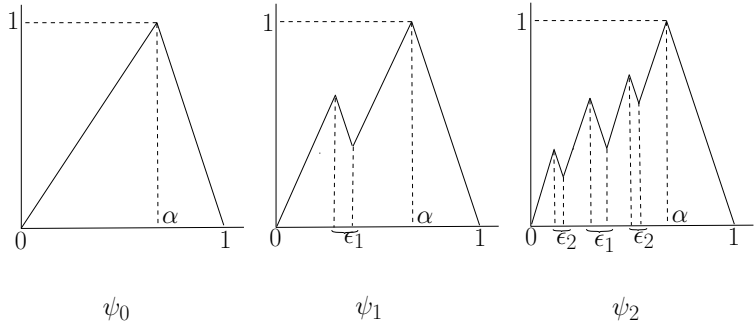}} \caption{First two
steps in the construction of the limit function}
\label{fig:limit_function}
\end{center}
\end{figure}

We first fix a nonincreasing sequence of positive real numbers
$\epsilon_i, \textrm{for }i = 1,2,3, \ldots$, such that $\epsilon_1
\leq 1-\alpha$ and

\begin{equation} \label{series}
\sum_{i=1}^{+\infty} 2^{i-1} \epsilon_i < \alpha.
\end{equation}

For example, $ \epsilon_i = \alpha(\frac{1}{4})^i$ is such a
sequence when $0 < \alpha \leq \frac{4}{5}$. The upper bound of
$\frac{4}{5}$ for $\alpha$ is implied by the fact that $\epsilon_1
\leq 1 - \alpha$.\bigskip

We construct $\psi_{i+1}$ from $\psi_i$ by modifying each segment
with positive slope in the graph of $\psi_i$ as follows.

For every  maximal (with respect to set inclusion) interval $[a,b]
\subseteq [0,\alpha]$ where $\psi_i$ has constant positive slope we
replace the line segment from $(a,\psi_i(a))$ to $(b,\psi_i(b))$
with the following three segments.
\begin{itemize}
\item  The segment connecting $(a,\psi_i(a))$
and
$(\frac{(a+b)-\epsilon_{i+1}}{2},\psi_i(\frac{a+b}{2})+\frac{\epsilon_{i+1}}{2(1-\alpha)})$,
\item The segment connecting  $(\frac{(a+b)-\epsilon_{i+1}}{2},
\psi_i(\frac{a+b}{2})+\frac{\epsilon_{i+1}}{2(1-\alpha)})$ and
$(\frac{(a+b)+\epsilon_{i+1}}{2},
\psi_i(\frac{a+b}{2})-\frac{\epsilon_{i+1}}{2(1-\alpha)})$,
\item The segment connecting $(\frac{(a+b)+\epsilon_{i+1}}{2},
\psi_i(\frac{a+b}{2})-\frac{\epsilon_{i+1}}{2(1-\alpha)})$ and $(b,
\psi_i(b))$.
\end{itemize}
Figure~\ref{fig:limit_function} shows the transformation of $\psi_0$
to $\psi_1$ and $\psi_1$ to $\psi_2$.

\iffalse

Note that $a < \frac{(a+b)-\epsilon_{i+1}}{2} <
\frac{(a+b)+\epsilon_{i+1}}{2} < b$ as a consequence of
(\ref{series}).

\fi

The function $\psi$ which we show to be a facet but not piecewise
linear is defined as the limit of this sequence of functions, namely

\begin{equation}
\psi(x) = \lim_{i \rightarrow \infty} \psi_i(x)
\end{equation}\label{PSI}

This limit is well defined when (\ref{series}) holds, as shown in
Section~\ref{SEC-limit}.

In the next section we show that each function $\psi_i$ is well
defined and is a facet. In Section~\ref{SEC-limit} we analyze the
limit function $\psi$, showing that it is well defined, is a facet,
but is not piecewise linear.

As discussed in the Introduction, the sequence $\psi_i$ defines a
class of facets which was also discovered independently by Kianfar
and Fathi in~\cite{kf} where they are referred to as $n$-step MIR
functions. Their constructions are a little more general than
the class of facets defined by the $\psi_i$'s. Our
analysis in the next section is different from their treatment of
these functions.

\section{Analysis of the function $\psi_i$}

\begin{fact}\label{fact:measures} For $i\geq 0$,
$\psi_i$ is a continuous  function which is  piecewise linear
 with $2^i$ pieces with positive slope and  $2^i$ pieces
with negative slope. Furthermore:
\begin{enumerate}
\item There is one negative slope interval of length
$1-\alpha$ and there are $2^{k-1}$ negative slope intervals of
length $\epsilon_k$ for $k=1,\ldots,i$;
\item The negative slope pieces have  slope
$-\frac{1}{1-\alpha}$;
\item Each positive slope interval has length $\frac{\gamma_i}{2^i}$, where
$\gamma_i={\alpha-\sum_{k=1}^i 2^{k-1}\epsilon_k}$;
\item The positive slope pieces have slope
$\frac{1-\gamma_i}{(1-\alpha)\gamma_i}$;
\item The function $\psi_i$ is well-defined.
\end{enumerate}
\end{fact}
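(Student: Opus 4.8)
The plan is to prove all five items simultaneously by induction on $i$, strengthening the hypothesis to also record that every positive-slope piece of $\psi_i$ is contained in $[0,\alpha]$ and that no two positive-slope pieces of $\psi_i$ share an endpoint; the latter guarantees that each maximal subinterval of $[0,\alpha]$ on which $\psi_i$ has constant positive slope is a single piece, so that the construction of $\psi_{i+1}$ acts on exactly the $2^i$ positive-slope pieces. The base case $i=0$ is a direct check: $\psi_0$ has one positive-slope piece, the interval $[0,\alpha]$ of length $\alpha=\gamma_0$ with slope $\tfrac1\alpha=\tfrac{1-\gamma_0}{(1-\alpha)\gamma_0}$, and one negative-slope piece, the interval $[\alpha,1[$ of length $1-\alpha$ with slope $-\tfrac1{1-\alpha}$; the family of intervals of length $\epsilon_k$ is empty, the strengthened hypotheses hold trivially, and $\psi_0$ is clearly well-defined and continuous.

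For the inductive step I would first settle item 5. The construction replaces each positive-slope piece $[a,b]$ of $\psi_i$ by three segments whose breakpoints are $a$, $\tfrac{(a+b)-\epsilon_{i+1}}{2}$, $\tfrac{(a+b)+\epsilon_{i+1}}{2}$, $b$; these are strictly increasing, since $\epsilon_{i+1}>0$ handles the middle inequality and the two outer ones both amount to $\epsilon_{i+1}<b-a$. By the induction hypothesis $b-a=\gamma_i/2^i$, and $(\ref{series})$ gives $\alpha>\sum_{k\geq 1}2^{k-1}\epsilon_k\geq\sum_{k=1}^{i}2^{k-1}\epsilon_k+2^i\epsilon_{i+1}$, so $\gamma_i=\alpha-\sum_{k=1}^{i}2^{k-1}\epsilon_k>2^i\epsilon_{i+1}>0$. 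Hence $\epsilon_{i+1}<\gamma_i/2^i=b-a$ as needed, and $\gamma_i>0$, which legitimizes all denominators below. As the modification preserves the endpoints $(a,\psi_i(a))$ and $(b,\psi_i(b))$ and leaves $\psi_i$ unchanged outside $[0,\alpha]$, the function $\psi_{i+1}$ is well-defined, continuous and piecewise linear.

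The rest is counting and routine slope/length computations. Replacing each of the $2^i$ positive-slope pieces $[a,b]$ of $\psi_i$ by two outer segments and one middle segment, and leaving the $2^i$ negative-slope pieces untouched, produces $2^{i+1}$ pieces of each sign. The middle segment has $x$-length $\epsilon_{i+1}$ and $y$-drop $\tfrac{\epsilon_{i+1}}{1-\alpha}$, hence slope $-\tfrac1{1-\alpha}$; this supplies the $2^{(i+1)-1}$ new negative-slope intervals of length $\epsilon_{i+1}$ demanded by items 1 and 2, the other negative pieces being inherited and already conforming. Each outer segment has $x$-length $\tfrac12\big(\tfrac{\gamma_i}{2^i}-\epsilon_{i+1}\big)=\tfrac{\gamma_i-2^i\epsilon_{i+1}}{2^{i+1}}=\tfrac{\gamma_{i+1}}{2^{i+1}}$, which is item 3. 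For item 4, writing $s_i=\tfrac{1-\gamma_i}{(1-\alpha)\gamma_i}$ for the common positive slope of $\psi_i$ and using $\psi_i(\tfrac{a+b}{2})-\psi_i(a)=s_i\cdot\tfrac{b-a}{2}$, the slope of the first outer segment equals $\dfrac{s_i(b-a)+\epsilon_{i+1}/(1-\alpha)}{(b-a)-\epsilon_{i+1}}$; substituting $b-a=\gamma_i/2^i$ turns the numerator into $\tfrac{1-\gamma_i+2^i\epsilon_{i+1}}{(1-\alpha)2^i}=\tfrac{1-\gamma_{i+1}}{(1-\alpha)2^i}$ and the denominator into $\tfrac{\gamma_{i+1}}{2^i}$, so the slope is $\tfrac{1-\gamma_{i+1}}{(1-\alpha)\gamma_{i+1}}$, which is positive because $0<\gamma_{i+1}\leq\alpha<1$; by symmetry of the construction about $\tfrac{a+b}{2}$ the third outer segment has the same $\Delta x$ and $\Delta y$, hence the same slope. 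Finally, each new positive-slope piece lies in $[0,\alpha]$ and is flanked on both sides by a negative-slope piece (the new middle segment, a segment of the adjacent block, or the boundary of $[0,\alpha]$), so the strengthened hypothesis is restored and the induction closes.

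I do not expect a genuine obstacle, since every step is elementary; the one place needing care is item 5, where $(\ref{series})$ has to be used both to put the new breakpoints in order ($\epsilon_{i+1}<b-a$) and to keep $\gamma_i$ strictly positive so that the slope formulas are meaningful, together with the bookkeeping in item 1 that separates the inherited negative pieces from the $2^i$ freshly created ones.
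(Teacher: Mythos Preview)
Your proof is correct and takes essentially the same inductive-by-construction approach as the paper, only spelled out more carefully. The paper is terser: it declares items~1 and~2 immediate, obtains item~3 from item~1 by subtracting the total negative-slope length $1-\alpha+\sum_{k=1}^i 2^{k-1}\epsilon_k$ from $1$, and derives item~4 from the global constraint $\psi_i(0)=\psi_i(1)=0$ (total rise equals total fall over $[0,1]$) rather than computing the outer-segment slope directly as you do.
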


\begin{proof}  The fact that
$\psi_i$ is a continuous  function which is  piecewise linear
 with $2^i$ pieces with positive slope and  $2^i$ pieces
with negative slope, and facts 1. and 2. are immediate by
construction.\\ Therefore the sum of the lengths of the negative
slope intervals is $1-\alpha+\sum_{k=1}^i 2^{k-1}\epsilon_k$. Since
$\psi_i$ contains $2^i$ positive slope intervals with the same
length, this
proves 3.\\
The total decrease of $\psi_i$ in $[0,1]$ is
$\frac{-1}{1-\alpha}(1-\gamma_i)$. Since $\psi_i$ is continuous,
piecewise linear, all positive slope intervals have the same slope
and $\psi_i(0)=\psi_i(1)=0$, then a positive slope interval has
slope $\frac{1-\gamma_i}{(1-\alpha)\gamma_i}$ and this proves 4.\\
Finally, by~(\ref{series}), $\gamma_i>0$ for every $i\geq 0$, thus
$\psi_i$ is a well-defined function.

\end{proof}

We now demonstrate that each function $\psi_i$ is subadditive.  Note
that the function $\psi_i$ depends only upon the choice of
parameters $\alpha, \epsilon_1, \epsilon_2, \ldots, \epsilon_i$. It
will sometimes be convenient to denote the function $\psi_i$ by
$\psi_i^{\alpha,\epsilon_1, \epsilon_2, \ldots, \epsilon_i}$ in this
section.

\begin{figure}[htbp]
\begin{center}
\scalebox{.4}{\epsfig{file=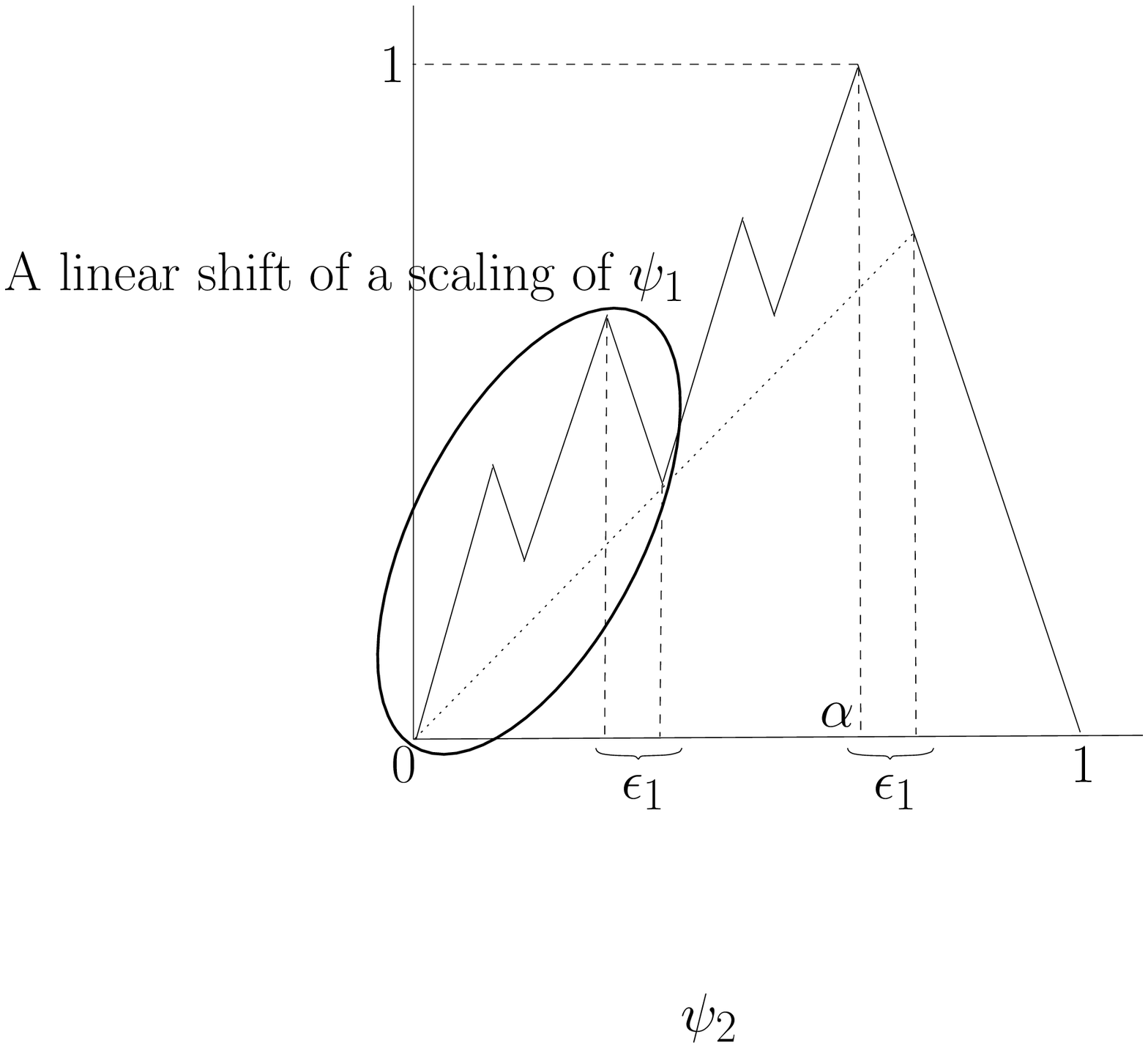}}
\caption{Illustrating the proof of subadditivity of $\psi_2$}
\label{fig:subadditivity_proof}
\end{center}
\end{figure}

The key observation is the following lemma.
Figure~\ref{fig:subadditivity_proof} illustrates this for the
function $\psi_2^{\alpha,\epsilon_1,\epsilon_2}$.

\begin{lemma}\label{lem:recursive lemma}
For $x \in [0, \alpha + \epsilon_1]$ and $i\geq 1$, $\psi_i^{\alpha,
\epsilon_1, \epsilon_2, \ldots, \epsilon_i}(x) = \lambda x + \mu
\psi_{i-1}^{\frac{\alpha-\epsilon_1}{\alpha+\epsilon_1},
\frac{2\epsilon_2}{\alpha+\epsilon_1},
\frac{2\epsilon_3}{\alpha+\epsilon_1},\ldots,
\frac{2\epsilon_i}{\alpha+\epsilon_1}}(\frac{2x}{\alpha+\epsilon_1})$,
where
\[
\lambda = \frac{1-\alpha -
\epsilon_1}{(\alpha+\epsilon_1)(1-\alpha)} \textrm{and } \mu =
\frac{\epsilon_1}{(\alpha + \epsilon_1)(1 - \alpha)} .
\]
\end{lemma}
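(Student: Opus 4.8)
The plan is to prove the identity by a direct computation that verifies both sides are continuous piecewise linear functions on $[0,\alpha+\epsilon_1]$ with the same breakpoints, the same values at those breakpoints, and the same slopes on each piece; since a continuous piecewise linear function is determined by its breakpoints and values, this suffices. First I would unwind what the right-hand side looks like. Write $\psi := \psi_{i-1}^{\alpha',\epsilon_2',\ldots,\epsilon_i'}$ where $\alpha' = \frac{\alpha-\epsilon_1}{\alpha+\epsilon_1}$ and $\epsilon_k' = \frac{2\epsilon_k}{\alpha+\epsilon_1}$ for $k=2,\ldots,i$; note $\alpha'+\epsilon_2'+\cdots$ style sums behave well, and in particular one checks via \eqref{series} that these are legitimate parameters (i.e.\ $0<\alpha'<1$, $\epsilon_2'\leq 1-\alpha'$, and the analogue of \eqref{series} holds, so $\psi$ is well-defined by Fact~\ref{fact:measures}). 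The substitution $x\mapsto \frac{2x}{\alpha+\epsilon_1}$ maps $[0,\alpha+\epsilon_1]$ onto $[0,2]$, and on $[0,1]$ the function $\psi$ is the honest $(i-1)$-step function while on $[1,2]$ it is the periodic extension, i.e.\ $\psi(\frac{2x}{\alpha+\epsilon_1}) = \psi(\frac{2x}{\alpha+\epsilon_1}-1)$. So the RHS is an affine push-forward (vertical scaling by $\mu>0$ plus the linear term $\lambda x$, horizontal scaling by $\frac{2}{\alpha+\epsilon_1}$) of $\psi$, and by Fact~\ref{fact:scaling_subadd}-type reasoning it is piecewise linear with the breakpoints of $\psi$ pulled back through the substitution.

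Next I would identify the breakpoints and slope pattern of the left-hand side $\psi_i^{\alpha,\epsilon_1,\ldots,\epsilon_i}$ restricted to $[0,\alpha+\epsilon_1]$. By the construction, the graph of $\psi_i$ on $[0,\alpha]$ consists of the positive-slope "staircase" pieces (each of slope $\frac{1-\gamma_i}{(1-\alpha)\gamma_i}$ by Fact~\ref{fact:measures}(4)) interleaved with the $\epsilon_k$-length negative-slope notches; the first notch has length $\epsilon_1$ and sits symmetrically around $\alpha/2$... more precisely, I would carefully track where the breakpoints of $\psi_i$ lie in $[0,\alpha+\epsilon_1]$: they are exactly $\alpha/2 \pm (\text{stuff built from } \epsilon_2,\ldots,\epsilon_i)$ on the lower half, plus the point $\alpha$ and the first part of the $(1-\alpha)$-slope descending piece out to $\alpha+\epsilon_1$. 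The key structural claim is that the recursive construction defining $\psi_i$ from $\psi_0$ has a "self-similar" lower-left quarter: the portion of $\psi_i$ over $[0,\alpha/2]$ looks, after affine rescaling, like $\psi_{i-1}$ over $[0,\alpha']$ with the stated reduced parameters. I would verify this by matching breakpoint locations (the $\epsilon_{k+1}$-notches of $\psi_i$ near the $j$-th positive segment correspond to the $\epsilon_k'$-notches of $\psi_{i-1}$) and matching slopes (check that $\mu\cdot\frac{2}{\alpha+\epsilon_1}\cdot\frac{1-\gamma'_{i-1}}{(1-\alpha')\gamma'_{i-1}} + \lambda$ equals $\frac{1-\gamma_i}{(1-\alpha)\gamma_i}$ on positive pieces, and $\mu\cdot\frac{2}{\alpha+\epsilon_1}\cdot\frac{-1}{1-\alpha'} + \lambda = \frac{-1}{1-\alpha}$ on negative pieces, where $\gamma'_{i-1} = \alpha' - \sum_{k=2}^{i} 2^{k-2}\epsilon_k'$). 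Finally, match the endpoint values: at $x=0$ both sides are $0$ (using $\psi(0)=0$); at $x=\alpha+\epsilon_1$ both sides equal $\psi_i(\alpha+\epsilon_1)$, which on the RHS comes from $\psi(2) = \psi(0) = 0$... wait, that is not right, so the endpoint check actually pins down $\lambda$: at $x=\alpha+\epsilon_1$, $\psi(\frac{2x}{\alpha+\epsilon_1}) = \psi(2)$, and one reads off $\psi_i(\alpha+\epsilon_1) = \lambda(\alpha+\epsilon_1)$, which is the linear piece of slope $-\frac{1}{1-\alpha}$ descending from $\psi_i(\alpha)=1$, giving $1 - \frac{\epsilon_1}{1-\alpha} = \lambda(\alpha+\epsilon_1)$ — consistent with the stated $\lambda$.

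The main obstacle, and where I expect to spend the bulk of the effort, is the bookkeeping that establishes the self-similarity of the construction: precisely lining up which notch of $\psi_i$ corresponds to which notch of $\psi_{i-1}$ under the affine change of variables, and verifying that the reduced parameters $\alpha', \epsilon_2', \ldots, \epsilon_i'$ are exactly the right ones so that the recursive "replace each positive segment by three segments" operation commutes with the affine map. Two sanity checks will control errors: (a) the arithmetic identity $\gamma_i = \frac{\alpha+\epsilon_1}{2}\gamma'_{i-1}$ relating the positive-segment-length parameters (this should follow from $\gamma_i = \alpha - \sum_{k=1}^i 2^{k-1}\epsilon_k = \alpha - \epsilon_1 - \sum_{k=2}^{i}2^{k-1}\epsilon_k$ and $\gamma'_{i-1} = \alpha' - \sum_{k=2}^i 2^{k-2}\epsilon_k' = \frac{\alpha-\epsilon_1}{\alpha+\epsilon_1} - \sum_{k=2}^i 2^{k-2}\cdot\frac{2\epsilon_k}{\alpha+\epsilon_1} = \frac{2}{\alpha+\epsilon_1}(\frac{\alpha-\epsilon_1}{2} - \sum_{k=2}^i 2^{k-2}\epsilon_k) = \frac{2\gamma_i}{\alpha+\epsilon_1}$, which indeed gives $\gamma_i = \frac{\alpha+\epsilon_1}{2}\gamma'_{i-1}$); and (b) a base-case check at $i=1$, where the RHS has $\psi_0^{\frac{\alpha-\epsilon_1}{\alpha+\epsilon_1}}(\frac{2x}{\alpha+\epsilon_1})$ and one directly verifies the three-segment formula for $\psi_1$ on $[0,\alpha+\epsilon_1]$ matches $\lambda x + \mu\psi_0^{\alpha'}(\frac{2x}{\alpha+\epsilon_1})$ piece by piece. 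With (a) in hand, the slope identities in the previous paragraph become routine algebra, and the breakpoint matching plus the two endpoint values $0$ and $\lambda(\alpha+\epsilon_1)$ complete the argument that the two continuous piecewise linear functions coincide.
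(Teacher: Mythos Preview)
Your plan is sound and would yield a correct proof, but it proceeds by a different route than the paper. You propose a direct verification: match breakpoints, match slopes on each piece (the two slope identities you write down, once corrected, do hold), and match the value at $x=0$. The paper instead argues by induction on $i$ via the auxiliary function $\phi_i(x)=\psi_i(x)-\lambda x$ on $[0,\alpha+\epsilon_1]$. The base case $i=1$ is exactly your ``base-case check'': one sees directly that $\phi_1$ consists of two identical triangles, so $\mu^{-1}\phi_1\bigl(\tfrac{(\alpha+\epsilon_1)x}{2}\bigr)=\psi_0^{\alpha'}(x)$. For the inductive step the paper's key observation is that the ``replace each positive segment by three segments'' construction commutes with subtracting $\lambda x$ (because the positive slope of $\psi_i$ exceeds $\lambda$, so $\phi_i$ and $\psi_i$ share positive-slope intervals), and it also commutes with affine rescaling; hence $\phi_{i+1}$ is obtained from $\phi_i$ by the same operation, and since $\phi_i$ was (inductively) a rescaled $\psi_{i-1}^{\alpha',\ldots}$, applying the operation yields $\phi_{i+1}$ as a rescaled $\psi_i^{\alpha',\ldots}$. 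This sidesteps the explicit breakpoint bookkeeping that you correctly flag as the main obstacle in your approach; to make your breakpoint-matching rigorous you would almost certainly end up proving something inductive anyway, so the paper's route is shorter.

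One small correction to your sanity check: your displayed computation of $\gamma'_{i-1}$ is off by a factor of two. From $\gamma'_{i-1}=\alpha'-\sum_{k=2}^{i}2^{k-2}\epsilon_k'=\tfrac{1}{\alpha+\epsilon_1}\bigl(\alpha-\epsilon_1-\sum_{k=2}^{i}2^{k-1}\epsilon_k\bigr)$ one gets $\gamma'_{i-1}=\tfrac{\gamma_i}{\alpha+\epsilon_1}$, i.e.\ $\gamma_i=(\alpha+\epsilon_1)\gamma'_{i-1}$ rather than $\tfrac{\alpha+\epsilon_1}{2}\gamma'_{i-1}$. With this corrected relation your positive-slope identity does check out.
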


\begin{proof}
Notice that $\lambda$ is the slope of the line passing through the
points $(0,0)$ and $(\frac
{\alpha+\epsilon_1}2,\psi^{\alpha,\epsilon_1}_1(\frac
{\alpha+\epsilon_1}2))=(\frac {\alpha+\epsilon_1}2,\frac
{1-\epsilon_1}2)$.

For $x\in[0,\alpha+\epsilon_1]$, let $\phi_i(x) = \psi_i^{\alpha,
\epsilon_1, \epsilon_2, \ldots, \epsilon_i}(x) - \lambda x$. Notice
that the graph of $\phi_1$ in the interval $[0,\alpha+\epsilon_1]$
is comprised of two identical triangles, one with basis
$[0,\frac{\alpha+\epsilon_1}2]$ and apex
$(\frac{\alpha-\epsilon_1}2,\mu)$ and the other with basis
$[\frac{\alpha+\epsilon_1}2,\alpha+\epsilon_1]$ and apex
$(\alpha,\mu)$, where
$\mu=\psi_1(\frac{\alpha-\epsilon_1}2)-\lambda(\frac{\alpha-\epsilon_1}2)$.
Therefore, for $x\in [0,1[$,
$$\mu^{-1}\phi_1\left(\frac{(\alpha+\epsilon_1)x}2\right)=\psi_0^{\frac{\alpha-\epsilon_1}{\alpha+\epsilon_1}}(x)$$
thus
$\phi_1(x)=\mu\psi_0^{\frac{\alpha-\epsilon_1}{\alpha+\epsilon_1}}(\frac{2x}{\alpha+\epsilon_1})$
because $\phi_1(x)=\phi_1(x+\frac{\alpha+\epsilon_1}2)$ for every
$x\in[0,\frac{\alpha+\epsilon_1}2[$.\bigskip

Assume by induction that
$\phi_i(x)=\phi_i(x+\frac{\alpha+\epsilon_1}2)$ for every
$x\in[0,\frac{\alpha+\epsilon_1}2[$, and that, for $x \in [0,1[$,
$\mu^{-1}\phi_i\left(\frac{(\alpha+\epsilon_1)x}2\right)=
\psi_{i-1}^{\frac{\alpha-\epsilon_1}{\alpha+\epsilon_1},\frac{2\epsilon_2}{\alpha+\epsilon_1},\ldots,\frac{2\epsilon_{i}}{\alpha+\epsilon_1}}(x)$.

Notice that, by Fact~\ref{fact:measures}, the slope of $\psi_i$ in
the intervals of positive slope is always greater than $\lambda$,
hence $\phi_i$ has positive slope exactly in the same intervals
where $\psi_i$ has positive slope.

Therefore, by construction of
$\psi^{\alpha,\epsilon_1,\ldots,\epsilon_{i+1}}_{i+1}$, the function
$\phi_{i+1}$ is obtained from $\phi_i$ by replacing each maximal
positive slope segment $[(a,\phi_i(a)),(b,\phi_i(b))]$ with: \\
- the segment connecting
$(a,\phi_i(a))$ and $(\frac{(a+b)-\epsilon_{i+1}}{2},\phi_i(\frac{a+b}{2})+\frac{\epsilon_{i+1}}{2(1-\alpha)})$,\\
- the segment connecting
$(\frac{(a+b)-\epsilon_{i+1}}{2},\phi_i(\frac{a+b}{2})+\frac{\epsilon_{i+1}}{2(1-\alpha)})$
and $(\frac{(a+b)+\epsilon_{i+1}}{2},
\phi_i(\frac{a+b}{2})-\frac{\epsilon_{i+1}}{2(1-\alpha)})$,\\
- the segment connecting  $(\frac{(a+b)+\epsilon_{i+1}}{2},
\phi_i(\frac{a+b}{2})-\frac{\epsilon_{i+1}}{2(1-\alpha)})$ and $(b,
\phi_i(b))$.

Thus, by induction,
$\phi_{i+1}(x)=\phi_{i+1}(x+\frac{\alpha+\epsilon_1}2)$ for every
$x\in[0,\frac{\alpha+\epsilon_1}2[$ and, for $x\in[0,1[$, we have
that $\mu^{-1}\phi_{i+1}\left(\frac{(\alpha+\epsilon_1)x}2\right)=
\psi_{i}^{\frac{\alpha-\epsilon_1}
{\alpha+\epsilon_1},\frac{2\epsilon_2}{\alpha+\epsilon_1},\ldots,\frac{2\epsilon_{i}}{\alpha+\epsilon_1},\frac{2\epsilon_{i+1}}{\alpha+\epsilon_1}}(x)$.
Therefore, for $x\in[0,\alpha+\epsilon_1[$, we have $\phi_{i+1}(x)=
\mu\psi_{i}^{\frac{\alpha-\epsilon_1}
{\alpha+\epsilon_1},\frac{2\epsilon_2}{\alpha+\epsilon_1},\ldots,\frac{2\epsilon_{i+1}}{\alpha+\epsilon_1}}(\frac{2x}{\alpha+\epsilon_1})$.
\end{proof}

\begin{remark}\label{rmk:coefficients}
Given any $0 < \alpha <1$ and any non-increasing sequence of
positive real numbers $\epsilon_i$ satisfying (\ref{series}) and
$\epsilon_1 \leq 1-\alpha$, let
$\alpha'=\frac{\alpha-\epsilon_1}{\alpha+\epsilon_1}$,
$\epsilon'_i=\frac{2\epsilon_{i+1}}{\alpha+\epsilon_1}$, $i\geq 1$.
Then $\epsilon'_1\leq 1-\alpha'$, $\{\epsilon_i'\}$ is a
non-increasing sequence, and $\sum_{i=1}^{+\infty}
2^{i-1}\epsilon'_i<\alpha'$.
\end{remark}

We next prove that each $\psi_i$ is a non-negative function.
\begin{fact}\label{fact:non-neg}
$\psi_i^{\alpha, \epsilon_1, \ldots \epsilon_i}(x) \geq 0$ for all
$x$, and for all parameters such that $\epsilon_1 \leq 1-\alpha$ and
$\epsilon_i$ is a non-increasing sequence.
\end{fact}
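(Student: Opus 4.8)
The plan is to prove the statement by induction on $i$, exploiting the recursive description of $\psi_i$ furnished by Lemma~\ref{lem:recursive lemma}. For the base case $i=0$ there are no parameters $\epsilon_k$ and the claim is immediate from the explicit formula: $\psi_0^{\alpha}(x)=\frac{x}{\alpha}\ge 0$ on $[0,\alpha]$ and $\psi_0^{\alpha}(x)=\frac{1-x}{1-\alpha}\ge 0$ on $[\alpha,1[$.

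For the inductive step, fix $i\ge 1$ and assume the statement holds for $\psi_{i-1}$ with \emph{every} admissible choice of parameters. Since $\epsilon_1>0$ and $\epsilon_1\le 1-\alpha$, the intervals $[0,\alpha+\epsilon_1]$ and $[\alpha,1[$ overlap and together cover $[0,1[$, so it suffices to show $\psi_i\ge 0$ on each of them. On $[\alpha,1[$, Fact~\ref{fact:measures} tells us that all positive-slope intervals of $\psi_i$ lie inside $[0,\alpha]$ and, together with the small negative-slope intervals, tile $[0,\alpha]$; hence $[\alpha,1]$ is a single negative-slope piece of slope $-\frac1{1-\alpha}$. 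Over a piece of length $1-\alpha$ with this slope the function drops by exactly $1$, and $\psi_i(1)=\psi_i(0)=0$ by periodicity, so on $[\alpha,1[$ the function decreases linearly from $\psi_i(\alpha)=1$ down to $0$ and is therefore nonnegative.

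On $[0,\alpha+\epsilon_1]$, I would invoke Lemma~\ref{lem:recursive lemma}, which writes $\psi_i^{\alpha,\epsilon_1,\dots,\epsilon_i}(x)=\lambda x+\mu\,\psi_{i-1}^{\alpha',\epsilon_1',\dots,\epsilon_{i-1}'}\!\big(\tfrac{2x}{\alpha+\epsilon_1}\big)$ for the rescaled parameters $\alpha'=\frac{\alpha-\epsilon_1}{\alpha+\epsilon_1}$, $\epsilon_k'=\frac{2\epsilon_{k+1}}{\alpha+\epsilon_1}$. Here $\mu>0$ always, while $\lambda=\frac{1-\alpha-\epsilon_1}{(\alpha+\epsilon_1)(1-\alpha)}\ge 0$ \emph{precisely because} $\epsilon_1\le 1-\alpha$ (this is the one and only place the hypothesis is used), and $x\ge 0$ throughout this interval. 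By Remark~\ref{rmk:coefficients} the rescaled parameters again satisfy the hypotheses of the statement, so the induction hypothesis gives $\psi_{i-1}^{\alpha',\dots}\ge 0$ on all of $\R$ via its periodic extension; in particular the argument $\tfrac{2x}{\alpha+\epsilon_1}\in[0,2]$ ranging outside $[0,1[$ causes no trouble. Hence $\psi_i(x)=\lambda x+\mu\,\psi_{i-1}^{\alpha',\dots}(\cdot)$ is a sum of nonnegative terms, so $\psi_i(x)\ge 0$, and combining the two cases completes the induction. I do not expect a genuine obstacle here: the only things to watch are the sign bookkeeping ($\lambda\ge 0$) and the fact that the rescaling keeps the parameters admissible, both of which are handed to us by Lemma~\ref{lem:recursive lemma} and Remark~\ref{rmk:coefficients}; the conceptual content is simply that $\psi_i$ restricted to $[0,\alpha+\epsilon_1]$ is, up to an affine map and a scaling with nonnegative coefficients, a copy of the already-handled $\psi_{i-1}$, while the rest of the domain is a single descending segment ending at height $0$.
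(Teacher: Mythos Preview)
Your proposal is correct and follows essentially the same approach as the paper: induction on $i$, using Lemma~\ref{lem:recursive lemma} together with $\lambda\ge 0$ (from $\epsilon_1\le 1-\alpha$) and Remark~\ref{rmk:coefficients} for the interval $[0,\alpha+\epsilon_1]$, and handling the remaining interval directly. The only cosmetic difference is that on the right-hand piece the paper simply observes $\psi_i=\psi_0$ on $[\alpha+\epsilon_1,1[$, whereas you argue via Fact~\ref{fact:measures} that $[\alpha,1[$ is a single descending segment; both are fine.
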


\begin{proof}
The proof is by induction on $i$. $\psi_0$ is non-negative by
definition.

Consider $\psi_{i+1}$. Clearly $\psi_{i+1}(x)\geq 0$ for $x \in
[\alpha + \epsilon_1, 1[$, since $\psi_{i+1}(x) = \psi_0(x)$ in this
interval. Note that in Lemma~\ref{lem:recursive lemma}, $\lambda
\geq 0$ because $1-\alpha \geq \epsilon_1$. So, when $x \in
[0,\alpha+\epsilon_1]$ Lemma~\ref{lem:recursive lemma} implies that
$\psi_{i+1}$ is non-negative, because $\psi_i$ is non-negative. Note
that the parameters for $\psi_i$ also satisfy the hypothesis by
Remark~\ref{rmk:coefficients}, so we can use the induction
hypothesis.
\end{proof}

\begin{lemma}
Given any $0 < \alpha <1$ and any nonincreasing sequence of positive
real numbers $\epsilon_i$ satisfying (\ref{series}) and $\epsilon_1
\leq 1-\alpha$, the function $\psi_i^{\alpha,\epsilon_1, \epsilon_2,
\ldots, \epsilon_i}$ is subadditive for all $i$.
\end{lemma}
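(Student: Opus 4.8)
The plan is to argue by induction on $i$. The base case $i=0$ amounts to the classical fact that $\psi_0$, which gives the Gomory mixed-integer inequality, is subadditive: periodically extended, $\psi_0$ is nonnegative, vanishes at $0$, is bounded above by $1$ and is symmetric about $f=\alpha$, and subadditivity then follows from a routine case analysis according to whether each of $a$, $b$, $\{a+b\}$ lies in $[0,\alpha]$ or in $[\alpha,1[$ (alternatively, one invokes that the Gomory mixed-integer inequality is minimal valid and applies Theorem~\ref{thm:gj}).

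For the inductive step, fix $i\ge 1$, put $\beta=\alpha+\epsilon_1$, $\lambda=\frac{1-\beta}{\beta(1-\alpha)}\ge 0$, $\mu=\frac{\epsilon_1}{\beta(1-\alpha)}>0$, and let $\alpha'=\frac{\alpha-\epsilon_1}{\alpha+\epsilon_1}$, $\epsilon'_k=\frac{2\epsilon_{k+1}}{\alpha+\epsilon_1}$ be the parameters of the lower-index function appearing in Lemma~\ref{lem:recursive lemma}. By Remark~\ref{rmk:coefficients} these parameters are admissible, so the inductive hypothesis applies and $\psi_{i-1}^{\alpha',\epsilon'_1,\ldots,\epsilon'_{i-1}}$ is subadditive; by Fact~\ref{fact:scaling_subadd}, the function $g(x):=\mu\,\psi_{i-1}^{\alpha',\ldots}\big(\tfrac{2x}{\beta}\big)$ (with $\psi_{i-1}$ periodically extended) is subadditive on $\R$, and it is periodic with period $\beta/2$. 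Lemma~\ref{lem:recursive lemma} gives $\psi_i(x)=\lambda x+g(x)$ for $x\in I:=[0,\beta]$, while by construction $\psi_i(x)=\psi_0(x)=\frac{1-x}{1-\alpha}$ for $x\in J:=\,]\beta,1[$ (and at $x=\beta$ as well). It is convenient to also prove, by the same induction using Lemma~\ref{lem:recursive lemma} and the hypothesis $\epsilon_1\le 1-\alpha$, the bound $\psi_i\le 1$, so that together with Fact~\ref{fact:non-neg} one has $0\le\psi_i\le 1$ and hence $0\le g\le\mu$.

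It then remains to check $\psi_i(a+b)\le\psi_i(a)+\psi_i(b)$ for all $a,b\in\R$; by periodicity we may take $a,b\in[0,1[$ and set $c:=\{a+b\}$. The principal case is $a,b\in I$ with $a+b\le\beta$, so that $c=a+b\in I$: then $\psi_i=\lambda(\cdot)+g(\cdot)$ at all three points, the linear term is additive, and the inequality follows from subadditivity of $g$. In every other configuration either some argument lies in $J$, where $\psi_i$ is nonnegative, at most $\frac{1-\beta}{1-\alpha}<1$, and affine with slope $-\frac{1}{1-\alpha}$; or there is a ``wrap-around'', $a+b\in\,]\beta,2\beta]$ with either $c=a+b\in J$ (when $a+b<1$) or $c=a+b-1\in[0,2\beta-1]\subseteq I$. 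Each such case is to be settled by a direct estimate combining the explicit form $\lambda x+g(x)$ on $I$, the affine form of $\psi_i$ on $J$, the bounds $0\le\psi_i\le1$ and $0\le g\le\mu$, the subadditivity of $g$ read through its period $\beta/2$, and the slack $\lambda\ge0$ supplied by the linear term, repeatedly using $\epsilon_1\le1-\alpha$ to make the estimates close.

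The main obstacle is precisely this last family of cases. The recursion applies only on $I$, and the period $\beta/2$ of $g$ does not match the period $1$ of the group, so one cannot hope to write $\psi_i$ as a sum of two subadditive functions and invoke Fact~\ref{fact:subadd+linear}: the natural candidate --- the function equal to $g$ on $I$ and to $0$ on $J$ --- already fails subadditivity as soon as $a+b$ wraps past $\beta$ (while $\psi_i$ itself remains subadditive there only because the linear piece absorbs the deficit). Hence the mixed-interval and wrap-around configurations must each be closed by an explicit, if elementary, computation, and the real work is in enumerating these configurations cleanly (there are several, depending on where $a$, $b$, $c$ fall and on whether $a+b$ exceeds $1$) and verifying that the slack $\lambda$ and the bound $g\le\mu$ suffice in each.
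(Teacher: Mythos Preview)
Your plan is the paper's plan: induct, use Lemma~\ref{lem:recursive lemma} to write $\psi_i=\lambda x+g$ on $I=[0,\beta]$ with $g$ subadditive by the inductive hypothesis and Fact~\ref{fact:scaling_subadd}, and then do a case split on where $a,b,\{a+b\}$ lie. The principal case and the case $a,b\in I$, $a+b\in J$ go exactly as you describe.

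There is, however, a gap in your toolkit for the two wrap-around cases. You propose to close them with the crude bound $0\le g\le\mu$ together with the linear slack $\lambda$, but this is not sharp enough. For instance, with $a,b\in I$ and $c=a+b-1\in I$, the inequality $\psi_i(c)\le\psi_i(a)+\psi_i(b)$ reduces (after cancelling the linear part) to $g(c)-g(a)-g(b)\le\lambda$; the bound $g\le\mu$ gives only $g(c)-g(a)-g(b)\le\mu$, and $\mu\le\lambda$ is equivalent to $2\epsilon_1\le 1-\alpha$, which is \emph{not} assumed. A similar shortfall occurs when $b\in J$ and $a+b>1$: there one needs $g(c)-g(a)\le (a-c)/(\beta(1-\alpha))$, whereas $g\le\mu$ only gives $g(c)-g(a)\le\mu=\epsilon_1/(\beta(1-\alpha))$, and $\epsilon_1\le a-c=1-b$ can fail when $b$ is close to $1$.

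What the paper actually uses in both wrap-around cases is Fact~\ref{fact:measures}(2): every negative-slope piece of $\psi_{k+1}$ has slope exactly $-\tfrac{1}{1-\alpha}$. This yields the one-sided Lipschitz estimate $\psi_{k+1}(x-y)\le\psi_{k+1}(x)+\tfrac{y}{1-\alpha}$ for $y\ge 0$, which is precisely the missing ingredient (see the paper's inequalities~(\ref{eq4}) and~(\ref{eq:LAST-CASE})). In the first wrap-around case the paper also uses the $\beta$-periodicity of $g$ to rewrite $g(a)+g(b)\ge g(a+b)=g(a+b-\beta)$ before applying this Lipschitz bound. So your outline is right, but you should replace the coarse bound $g\le\mu$ by the slope information from Fact~\ref{fact:measures}; once you do, the enumeration of cases matches the paper's and each one closes.
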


\begin{proof}
The proof is by induction. $\psi_0^{\alpha}$ is subadditive, since
it is a valid and minimal Gomory function. By the induction
hypothesis, $\psi_k^{\alpha, \epsilon_1, \ldots, \epsilon_k}$ is
subadditive and we wish to show this implies that
$\psi_{k+1}^{\alpha, \epsilon_1,\ldots,\epsilon_{k+1}}$ is
subadditive.

By Remark \ref{rmk:coefficients} and induction, the function
$\psi_{k}^{\frac{\alpha-\epsilon_1}{\alpha+\epsilon_1},
\frac{2\epsilon_2}{\alpha+\epsilon_1},
\frac{2\epsilon_3}{\alpha+\epsilon_1},\ldots,
\frac{2\epsilon_{k+1}}{\alpha+\epsilon_1}}$ is subadditive.\\

 We
define the function $\psi'_k(x) = \mu
\psi_{k}^{\frac{\alpha-\epsilon_1}{\alpha+\epsilon_1},
\frac{2\epsilon_2}{\alpha+\epsilon_1},
\frac{2\epsilon_3}{\alpha+\epsilon_1},\ldots,
\frac{2\epsilon_{k+1}}{\alpha+\epsilon_1}}(\frac{2x}{\alpha+\epsilon_1})$
where $\mu$ is defined in the statement of Lemma~\ref{lem:recursive
lemma}. Note that $\psi'_k$ has a period of
$\frac{\alpha+\epsilon_1}{2}$ and $\psi'_k$ is subadditive by
Fact~\ref{fact:scaling_subadd}.

In the remaining part of the proof, we will not need the extended
notation $\psi_k^{\alpha, \epsilon_1, \epsilon_2, \ldots,
\epsilon_k}$ and we shall refer to this function as simply $\psi_k$.
We now prove the subadditivity of $\psi_{k+1}$ assuming the
subadditivity of $\psi_k$, i.e. $\psi_{k+1}(a+b) \leq \psi_{k+1}(a)
+ \psi_{k+1}(b)$. Assume without loss of generality that $a\leq b$.
We then have the following two cases.

\medskip
\emph{Case 1 : $b$ is in the range $[0,\alpha+\epsilon_1]$}.\\
If $a+b \in [0,\alpha+\epsilon_1]$, $\psi_{k+1}(a+b) = \psi'_k(a+b)
+ \lambda(a+b)$ by Lemma~\ref{lem:recursive lemma}. Now
Fact~\ref{fact:subadd+linear} shows that $\psi_{k+1}$ is
subadditive.\\
If $a+b$ is in the range $[\alpha+\epsilon_1, 1]$, then
$\psi_{k+1}(a+b) \leq \lambda(a+b)$. On the other hand,
$\psi_{k+1}(a) \geq \lambda a$ and $\psi_{k+1}(b) \geq \lambda b$,
hence $\psi_{k+1}(a+b)\leq \psi_{k+1}(a)+\psi_{k+1}(b)$.\\ If $a+b$
is greater than $1$, then $(a+b)\textrm{mod }1<\alpha+\epsilon_1$.
Let $x = \alpha+\epsilon_1 - b$ and $y = 1 - \alpha-\epsilon_1$. So
$(a + b)\textrm{mod }1 = a - x - y$. Then

\begin{equation}\label{eq3}
\begin{array}{rclr}
\psi_{k+1}(a) + \psi_{k+1}(b) & = & \psi'_k(a) + \psi'_k(b) + \lambda a + \lambda b &\\
 & \geq & \psi'_k(a + b) +  \lambda a +  \lambda b &((a+b)\textrm{mod }1<\alpha+\epsilon_1)\\
 & = & \psi'_k(a - x) +  \lambda a +  \lambda b& (a-x=(a+b)-(\alpha+\epsilon_1))
\end{array}
\end{equation}
\noindent because $\psi'_k$ has period $\alpha+\epsilon_1$. Also,

\begin{equation}\label{eq4}
\begin{array}{rclr}
\psi_{k+1}(a + b ) & = & \psi_{k+1}(a -x- y) & \\
 & \leq & \psi_{k+1}(a-x) + \frac{y}{1-\alpha} & (\textrm{All negative slopes in }\psi_{k+1} \textrm{ are }-\frac{1}{1-\alpha})\\
 & = & \psi_{k+1}(a-x) +  \lambda (b + x) & (\textrm{by definition of } \lambda, x, y)\\
 & = & \psi'_k(a-x) +  \lambda (a-x) +  \lambda (b+x) & (\textrm{by Lemma \ref{lem:recursive lemma} because } 0 \leq a-x \leq \alpha + \epsilon_1)\\
 & = & \psi'_k(a-x) +  \lambda a +  \lambda b&
\end{array}
\end{equation}

\old{
\begin{equation}\label{eq4}
\begin{array}{rclr}
\psi_{k+1}(a + b ) & = & \psi_{k+1}((a -x) +(1- y)) & \\
 & \leq & \psi_{k+1}(a-x) + \psi_{k+1}(1-y) & ((a + b)\textrm{mod }1,\,a-x,\,1-y \textrm{ are in } [0,\alpha+\epsilon_1])\\
 & = & \psi_{k+1}(a-x) + \frac{1-\alpha-\epsilon_1}{1-\alpha} & (\textrm{All negative slopes in }\psi_{k+1} \textrm{ are }-\frac{1}{1-\alpha})\\
  & = & \psi_{k+1}(a-x) +  \lambda (b + x) & (\textrm{by definition of } x, y)\\
 & = & \psi'_k(a-x) +  \lambda (a-x) +  \lambda (b+x) & (\textrm{by Lemma \ref{lem:recursive lemma} because } 0 \leq a-x \leq \alpha + \epsilon_1)\\
 & = & \psi'_k(a-x) +  \lambda a +  \lambda b&
\end{array}
\end{equation}
}From (\ref{eq3}) and (\ref{eq4}) we get that $\psi_{k+1}(a) +
\psi_{k+1}(b) \geq \psi_{k+1}(a+b)$.

\old{
\[\begin{array}{rcl}
\psi_{k+1}(a+b) & = & \psi_{k+1}((a+b) \textrm{mod }1)) \\
 & = & \psi'_k((a+b)\textrm{mod }1) + l((a+b)\textrm{mod }1) \\
 & \leq & \psi'_k(a+b) + l(a+b) \\
 & \leq & \psi'_k(a) + \psi'(b) + l(a) + l(b) \\
 & = & \psi_{k+1}(a) + \psi_{k+1}(b)
\end{array}
\]
\noindent where the second inequality follows from the subadditivity
of $\psi'_k$. }

\medskip
\emph{Case 2 : $b$ is in the range $[\alpha+\epsilon_1, 1]$}

If $a+b$ is also in the range $[\alpha+\epsilon_1, 1]$, then
$\psi_{k+1}(a+b) \leq \psi_{k+1}(b)$. Therefore, $\psi_{k+1}(a+b)
\leq \psi_{k+1}(a) + \psi_{k+1}(b)$.

Now we consider the case where $a+b > 1$. Since every line segment
with negative slope in the graph of $\psi_{k+1}$ has slope
$-\frac{1}{1-\alpha}$, then in the range $[0,a]$ the line of slope
$-\frac{1}{1-\alpha}$ passing through $(a,\psi_{k+1}(a))$ lies above
the graph of $\psi_{k+1}$. Formally, for every $x\in [0,a]$,
\begin{equation}\label{eq:LAST-CASE}-\frac{1}{1-\alpha}x+\psi_{k+1}(a)+\frac{a}{1-\alpha}\geq\psi_{k+1}(x).\end{equation}

Now
$$\psi_{k+1}(a)+\psi_{k+1}(b)=\psi_{k+1}(a)+\frac{1-b}{1-\alpha}\geq-\frac{a+b-1}{1-\alpha}+\psi_{k+1}(a)+\frac{a}{1-\alpha}\geq\psi_{k+1}(a+b-1)$$
where the first equality is because $\psi_{k+1}(b) =
\frac{1}{1-\alpha}(1-b)$, and the last inequality follows
by~(\ref{eq:LAST-CASE}). Therefore, we get $\psi_{k+1}(a) +
\psi_{k+1}(b) \geq \psi_{k+1}(a+b-1) = \psi_{k+1}(a+b)$.
\end{proof}

\begin{fact}
$\psi_i(x)$ is a symmetric function.
\end{fact}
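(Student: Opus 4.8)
The plan is to show by induction on $i$ that $\psi_i(a)+\psi_i(\alpha-a)=1$ for every $a$, with $\psi_i$ regarded as a periodic function on $\R$. Since $\psi_i(\alpha)=1$ (Fact~\ref{fact:measures}) and a valid function must take value $1$ at $f$, the relevant symmetry parameter here is $f=\alpha$, so this identity is exactly the symmetry condition. Geometrically it says that the graph of $\psi_i$ is invariant under the point reflection $R\colon(x,y)\mapsto(\alpha-x,\,1-y)$. The base case $\psi_0$ is the Gomory mixed-integer function, and the identity follows at once from its two linear pieces (equivalently, $\psi_0$ is valid and minimal, hence symmetric by Theorem~\ref{thm:gj}).

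For the inductive step I would write $\psi_{i+1}=\psi_i+\delta_{i+1}$, where $\delta_{i+1}$ records the change made by the construction of Section~\ref{sec:constr}. That construction alters $\psi_i$ only on its maximal positive-slope intervals, and each local alteration agrees with $\psi_i$ at the two endpoints of its interval; since all these intervals lie in $[0,\alpha]$ and meet $0$ and $\alpha$ only as endpoints, $\delta_{i+1}$ vanishes on $\{0\}\cup[\alpha,1[$. The crucial structural observation is that on any one positive-slope interval $[a,b]$ of $\psi_i$, the function $\delta_{i+1}$ equals a single fixed odd function of $x-\tfrac{a+b}{2}$, the same for every such interval: by Fact~\ref{fact:measures} all these intervals have the same length and $\psi_i$ has the same positive slope on each, so after subtracting $\psi_i|_{[a,b]}$ the three-segment replacement is identical up to horizontal translation; and it is odd about the midpoint because the replacement is point-symmetric about $(\tfrac{a+b}{2},\psi_i(\tfrac{a+b}{2}))$ — the midpoint of its middle segment — while $\psi_i$ is linear there.

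Next I would feed the induction hypothesis back in: $R$-invariance of the piecewise linear graph of $\psi_i$ forces its breakpoints and slopes to be symmetric under $x\mapsto\alpha-x$ (from $\psi_i(\alpha-x)=1-\psi_i(x)$ one reads equal slopes on reflected pieces), so $x\mapsto\alpha-x$ permutes the maximal positive-slope intervals of $\psi_i$ and carries the midpoint of each to the midpoint of its image. Combining this with the oddness from the previous paragraph gives $\delta_{i+1}(\alpha-x)=-\delta_{i+1}(x)$ for all $x$ (outside the support of $\delta_{i+1}$ both sides are $0$, using that $\alpha-x\bmod 1\in\{0\}\cup[\alpha,1[$ whenever $x\in\{0\}\cup[\alpha,1[$). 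Therefore $\psi_{i+1}(\alpha-x)=\psi_i(\alpha-x)+\delta_{i+1}(\alpha-x)=\bigl(1-\psi_i(x)\bigr)-\delta_{i+1}(x)=1-\psi_{i+1}(x)$, which closes the induction.

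The real content is the bookkeeping in the two middle steps: verifying from the explicit segment endpoints in Section~\ref{sec:constr} that the local modification is point-symmetric about the midpoint of the segment it replaces, and that the per-interval bumps cancel in $R$-pairs (or are individually antisymmetric when an interval is mapped to itself). I expect this verification to be the main — though entirely routine — obstacle. In particular, no separate case analysis is needed for $a$ or $\alpha-a$ landing in $[\alpha,1[$ rather than in $]0,\alpha[$, because the whole argument runs through the decomposition $\psi_{i+1}=\psi_i+\delta_{i+1}$ and $\delta_{i+1}$ is already zero on $\{0\}\cup[\alpha,1[$.
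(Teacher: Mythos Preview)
Your proposal is correct and follows essentially the same approach as the paper: both argue by induction, writing $\psi_{i+1}=\psi_i+\delta_{i+1}$ and establishing the antisymmetry $\delta_{i+1}(x)+\delta_{i+1}(\alpha-x)=0$, from which symmetry of $\psi_{i+1}$ follows from that of $\psi_i$. The paper asserts this antisymmetry in a single line ``by construction,'' whereas you spell out the geometric reason (each local bump is odd about its interval's midpoint, and the induction hypothesis forces $x\mapsto\alpha-x$ to permute the positive-slope intervals and their midpoints), but the underlying argument is the same.
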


\begin{proof} It is straightforward to show that $\psi_0$ is
symmetric. Notice that, by construction, the function
$\psi_{i+1}-\psi_i $ satisfies
$$(\psi_{i+1}-\psi_i)(x)+(\psi_{i+1}-\psi_i)(\alpha-x)=0.$$
Therefore, if $\psi_i$ is symmetric, also $\psi_{i+1}$ is symmetric.
\end{proof}

\begin{theorem}
For $i\geq 0$, the function $\psi_i$ is a facet.
\end{theorem}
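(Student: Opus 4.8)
The plan is to reduce the statement to Theorem~\ref{thm:2-slopes}, which already asserts that a piecewise linear minimal valid function with only two distinct slopes is a facet. So the only work is to verify that $\psi_i$ meets the hypotheses of that theorem, and almost all of that work has already been done in the preceding lemmas.

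First I would show that $\psi_i$ is valid and minimal by invoking the Minimality Theorem (Theorem~\ref{thm:gj}). This needs four ingredients: $\psi_i(0)=0$, $\psi_i(f)=1$ (recall $f=\alpha$ in this construction), subadditivity, and symmetry. Subadditivity is exactly the content of the preceding lemma, and symmetry is the preceding fact, so both are in hand. The value $\psi_i(0)=0$ is immediate from the construction. For $\psi_i(\alpha)=1$ one can argue directly from the bookkeeping in Fact~\ref{fact:measures}: the positive-slope pieces have total length $\gamma_i$ and the negative-slope pieces lying inside $[0,\alpha]$ have total length $\alpha-\gamma_i$, so substituting the slope values $\frac{1-\gamma_i}{(1-\alpha)\gamma_i}$ and $-\frac{1}{1-\alpha}$ gives $\psi_i(\alpha)=\frac{1-\gamma_i}{1-\alpha}-\frac{\alpha-\gamma_i}{1-\alpha}=1$; alternatively, once symmetry and $\psi_i(0)=0$ are known, setting $a=f$ in $\psi_i(a)+\psi_i(f-a)=1$ yields $\psi_i(f)=1$ at once. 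Either way, Theorem~\ref{thm:gj} gives that $\psi_i$ is valid and minimal.

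Next I would appeal to Fact~\ref{fact:measures}, parts~2 and~4: every negative-slope piece of $\psi_i$ has slope $-\frac{1}{1-\alpha}$ and every positive-slope piece has slope $\frac{1-\gamma_i}{(1-\alpha)\gamma_i}$, so $\psi_i$ takes at most two distinct slopes. These two slopes are genuinely distinct for every $i$, since $0<\gamma_i<\alpha<1$ by~(\ref{series}) forces the positive-slope value to be strictly positive while the negative-slope value is strictly negative. Hence $\psi_i$ is a piecewise linear minimal valid function with exactly two distinct slopes, and Theorem~\ref{thm:2-slopes} immediately yields that $\psi_i$ is a facet.

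I do not expect a genuine obstacle here: the heavy lifting was absorbed into the already-proven subadditivity lemma, whose recursive reduction (Lemma~\ref{lem:recursive lemma}) is the real engine, and into the fact that $\psi_i$ has only two slopes. The only mildly delicate points are the length/slope bookkeeping establishing $\psi_i(f)=1$ and the observation that the two slopes in Fact~\ref{fact:measures} never coincide, both of which are routine. (The situation will be entirely different for the limit function $\psi$ in the next section, where there are infinitely many slopes and Theorem~\ref{thm:2-slopes} no longer applies; that case will instead require the Facet Theorem together with the Interval Lemma.)
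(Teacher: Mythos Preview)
Your proposal is correct and follows essentially the same route as the paper: verify the hypotheses of the Minimality Theorem (Theorem~\ref{thm:gj}) using the already-established subadditivity and symmetry, then invoke the two-slope Theorem~\ref{thm:2-slopes} via Fact~\ref{fact:measures}. Your write-up is simply a more detailed version of the paper's one-sentence proof, and the extra checks you spell out (that $\psi_i(\alpha)=1$ and that the two slopes are genuinely distinct) are routine points the paper leaves implicit.
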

\begin{proof}
Since $\psi_i$ is a function that is piecewise linear, subadditive,
symmetric and has only two slopes, then, by Theorems~\ref{thm:gj}
and \ref{thm:2-slopes}, $\psi_i$ is a facet.
\end{proof}

\section{Analysis of the limit function}\label{SEC-limit}

Recall that $\psi$ is the function defined by
$$\psi(x)=\lim_{i\rightarrow \infty}\psi_i(x)$$
for every $x\in[0,1[$.\bigskip

Fact~\ref{fact:measures} implies the following.
\begin{fact}\label{fact:slope}
Let $\gamma = \alpha - \sum_{i=1}^{+\infty} 2^{i-1} \epsilon_i$.
Then $\gamma > 0$ by (\ref{series}) and $\gamma < \gamma_i$ for all
$i$, and the value $s_i$ of the positive slope in $\psi_i$ is
bounded above by $\frac{1-\gamma}{(1-\alpha)\gamma}$.
\end{fact}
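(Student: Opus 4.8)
The plan is to reduce everything to the monotonicity of the map $t \mapsto \frac{1-t}{(1-\alpha)t}$ on the positive reals, together with the elementary fact that a partial sum of a series with positive terms is strictly smaller than its total sum.

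First I would dispose of $\gamma > 0$: this is immediate from~(\ref{series}), since $\gamma = \alpha - \sum_{i=1}^{+\infty} 2^{i-1}\epsilon_i$ and that series sums to strictly less than $\alpha$. Next, recall from Fact~\ref{fact:measures} that $\gamma_i = \alpha - \sum_{k=1}^{i} 2^{k-1}\epsilon_k$ and that $\gamma_i > 0$ for every $i$. Because each $\epsilon_k$ is positive, the tail $\sum_{k=i+1}^{+\infty} 2^{k-1}\epsilon_k$ is strictly positive, hence $\sum_{k=1}^{i} 2^{k-1}\epsilon_k < \sum_{k=1}^{+\infty} 2^{k-1}\epsilon_k$, and therefore $\gamma_i > \gamma$, as claimed.

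For the bound on the slope, recall again from Fact~\ref{fact:measures} that the positive slope of $\psi_i$ equals $s_i = \frac{1-\gamma_i}{(1-\alpha)\gamma_i}$. Consider the function $g(t) = \frac{1-t}{(1-\alpha)t} = \frac{1}{1-\alpha}\bigl(\tfrac{1}{t} - 1\bigr)$ for $t > 0$; since $1-\alpha > 0$ and $1/t$ is strictly decreasing, $g$ is strictly decreasing on $(0,+\infty)$. Both $\gamma$ and $\gamma_i$ lie in $(0,+\infty)$ with $\gamma < \gamma_i$, so $s_i = g(\gamma_i) < g(\gamma) = \frac{1-\gamma}{(1-\alpha)\gamma}$, which is the asserted bound.

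There is no real obstacle here; the only point requiring a touch of care is to invoke $\gamma_i > 0$ (already established in Fact~\ref{fact:measures}) so that the denominators are positive and $g$ is evaluated within its domain of monotonicity. None of the subadditivity or symmetry machinery is needed for this fact.
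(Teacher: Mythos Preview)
Your proof is correct and is exactly the argument the paper has in mind: the paper does not spell out a proof at all, merely stating that the fact ``follows from Fact~\ref{fact:measures}'', and your derivation via the formulas $\gamma_i=\alpha-\sum_{k=1}^i 2^{k-1}\epsilon_k$ and $s_i=\frac{1-\gamma_i}{(1-\alpha)\gamma_i}$ together with the monotonicity of $t\mapsto \frac{1-t}{(1-\alpha)t}$ is precisely the intended unpacking.
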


We can now show the following lemma.

\begin{lemma}
For any $x$, the sequence $\{\psi_i(x)\}_{i=1,2,3,\ldots}$ is a
Cauchy sequence, and therefore it converges. Moreover, the sequence
of functions $\{\psi_i\}_{i=1,2,3,\ldots}$ converges uniformly to
$\psi$.
\end{lemma}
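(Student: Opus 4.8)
The key is to control the difference $\psi_{i+1}(x) - \psi_i(x)$ uniformly in $x$, and then sum a geometric-type series. By construction, $\psi_{i+1}$ differs from $\psi_i$ only on the positive-slope intervals of $\psi_i$, and on each such maximal interval $[a,b]$ the modification replaces one segment by a ``zigzag'' of three segments whose vertical excursion away from the original segment is controlled by $\epsilon_{i+1}$. Concretely, the middle vertex of the zigzag is displaced from the value $\psi_i(\frac{a+b}{2})$ by $\pm\frac{\epsilon_{i+1}}{2(1-\alpha)}$, and the function stays between the original segment and these displaced values, so $|\psi_{i+1}(x)-\psi_i(x)| \le \frac{\epsilon_{i+1}}{2(1-\alpha)}$ for all $x$. (One should check the endpoints of the three new segments genuinely lie in the right order inside $[a,b]$; this is where $\epsilon_{i+1} \le \epsilon_1 \le 1-\alpha$ together with the length bound from Fact~\ref{fact:measures}, item~3, is used, and is essentially the commented-out remark ``$a < \frac{(a+b)-\epsilon_{i+1}}{2} < \frac{(a+b)+\epsilon_{i+1}}{2} < b$''.)

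Given the pointwise bound $\|\psi_{i+1}-\psi_i\|_\infty \le \frac{\epsilon_{i+1}}{2(1-\alpha)}$, I would argue as follows. For any $x$ and any $j > i$,
\[
|\psi_j(x) - \psi_i(x)| \;\le\; \sum_{k=i}^{j-1} |\psi_{k+1}(x)-\psi_k(x)| \;\le\; \frac{1}{2(1-\alpha)}\sum_{k=i+1}^{j} \epsilon_k.
\]
Since $\{\epsilon_k\}$ is nonincreasing and $\sum_{k\ge 1} 2^{k-1}\epsilon_k < \alpha < \infty$ by~(\ref{series}), in particular $\sum_{k\ge 1}\epsilon_k$ converges (it is dominated termwise by $\sum_{k\ge 1} 2^{k-1}\epsilon_k$), so the tail $\sum_{k=i+1}^{j}\epsilon_k$ tends to $0$ as $i\to\infty$, uniformly in $j$. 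Hence for every fixed $x$ the sequence $\{\psi_i(x)\}$ is Cauchy in $\mathbb{R}$ and converges; this defines $\psi(x)$, so $\psi$ is well-defined. Letting $j\to\infty$ in the displayed inequality gives $|\psi(x)-\psi_i(x)| \le \frac{1}{2(1-\alpha)}\sum_{k>i}\epsilon_k$, a bound independent of $x$ that vanishes as $i\to\infty$, which is exactly uniform convergence of $\{\psi_i\}$ to $\psi$.

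The main obstacle is the first step: making precise and rigorous the claim $\|\psi_{i+1}-\psi_i\|_\infty \le \frac{\epsilon_{i+1}}{2(1-\alpha)}$. This requires checking that the modification is well-defined (the three new vertices lie in the correct left-to-right order within each positive-slope interval, which needs the interval length $\frac{\gamma_i}{2^i} > \frac{\gamma}{2^i}$ to exceed $\epsilon_{i+1}$; since $\{\epsilon_k\}$ is nonincreasing and $\sum 2^{k-1}\epsilon_k$ converges, $2^i\epsilon_{i+1}\to 0$, and more carefully $\epsilon_{i+1}\le \epsilon_1 \le 1-\alpha$ combined with~(\ref{series}) suffices), and that the perturbed graph never leaves the vertical strip of half-width $\frac{\epsilon_{i+1}}{2(1-\alpha)}$ around the old graph. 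Once that geometric estimate is in hand, the Cauchy and uniform-convergence conclusions are routine from the convergence of $\sum\epsilon_k$, which is immediate from~(\ref{series}).
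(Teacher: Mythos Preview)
Your overall strategy is sound, but the specific uniform bound you assert,
\[
\|\psi_{i+1}-\psi_i\|_\infty \le \frac{\epsilon_{i+1}}{2(1-\alpha)},
\]
is not correct. The quantity $\frac{\epsilon_{i+1}}{2(1-\alpha)}$ is the vertical displacement of the new vertex from $\psi_i\!\left(\frac{a+b}{2}\right)$, but the new vertex sits at the \emph{shifted} abscissa $\frac{(a+b)\mp\epsilon_{i+1}}{2}$, not at $\frac{a+b}{2}$. Computing the vertical gap at that abscissa gives
\[
\psi_{i+1}\!\left(\tfrac{(a+b)-\epsilon_{i+1}}{2}\right)-\psi_i\!\left(\tfrac{(a+b)-\epsilon_{i+1}}{2}\right)
=\frac{\epsilon_{i+1}}{2}\Bigl(s_i+\tfrac{1}{1-\alpha}\Bigr)
=\frac{\epsilon_{i+1}}{2(1-\alpha)\gamma_i},
\]
using $s_i=\frac{1-\gamma_i}{(1-\alpha)\gamma_i}$ from Fact~\ref{fact:measures}. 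Since $\gamma_i>\gamma>0$ (Fact~\ref{fact:slope}), this is bounded by $\frac{\epsilon_{i+1}}{2(1-\alpha)\gamma}$, a constant multiple of $\epsilon_{i+1}$, so your telescoping argument still goes through verbatim once the constant is corrected. Everything after the bound (summability of $\sum\epsilon_k$ from~(\ref{series}), Cauchy, uniform convergence) is fine.

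Your route is genuinely different from the paper's. The paper does not estimate the perturbation via $\epsilon_{i+1}$ at all; instead it bounds $|\psi_{i+1}(x)-\psi_i(x)|$ by (slope of $\psi_{i+1}$)$\times$(length of a positive-slope interval of $\psi_i$) $\le s_{i+1}\cdot\frac{\gamma_i}{2^i}\le C\cdot 2^{-i}$, and then sums a geometric series directly. Your approach ties the estimate to the construction parameters $\epsilon_k$ and only needs $\sum\epsilon_k<\infty$, which is a weaker summability hypothesis than~(\ref{series}); the paper's approach is slightly cruder but avoids any careful geometry of the three-segment replacement. Either way the conclusion is immediate once a uniform-in-$x$ bound on $|\psi_{i+1}-\psi_i|$ with summable right-hand side is established.
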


\begin{proof}  By Fact \ref{fact:measures}, there are
 $2^i$  intervals  where $\psi_i$ has  positive slope, each of
 length $\frac{\gamma_i}{2^{i}}$. Note that $|\psi_i(x) - \psi_{i+1}(x)|
\leq s_{i+1}\frac{\gamma_i}{2^{i}}$ since the values of the two
functions match at the ends of the positive-slope intervals of
$\psi_i$.

By Fact~\ref{fact:slope}, $s_{i+1} \leq
\frac{1-\gamma}{(1-\alpha)\gamma}$ and we know that $\gamma_i <
\alpha$. So $|\psi_i(x) - \psi_{i+1}(x)| \leq C\frac{1}{2^{i}}$
where $C = \alpha \frac{1-\gamma}{(1-\alpha)\gamma}$. Therefore,
$|\psi_n(x) - \psi_m(x)| \leq \sum_{i=n}^{m-1} C\frac{1}{2^{i}}$ if
$n<m$. We can bound this expression using

\[
\sum_{i=n}^{m-1} C\frac{1}{2^{i}} \leq \sum_{i=n}^{\infty}
C\frac{1}{2^{i}} = C\frac{1}{2^{n-1}}
\]

This implies that the sequence is Cauchy and hence convergent.
Moreover, since the bound on $|\psi_n(x) - \psi_m(x)|$ does not
depend on $x$, the above argument immediately implies that the
sequence of functions $\psi_i$ converges uniformly to $\psi$.
\end{proof}

This also implies the following corollary.

\begin{corollary}
The function $\psi$ is continuous.
\end{corollary}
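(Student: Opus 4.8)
The plan is to invoke the classical fact that a uniform limit of continuous functions is continuous; both ingredients have already been supplied. Fact~\ref{fact:measures} states that each $\psi_i$ is continuous on $[0,1[$, and the preceding lemma shows that $\psi_i \to \psi$ uniformly, with the explicit estimate $|\psi_i(x) - \psi(x)| \le C/2^{i-1}$ for every $x$, where $C = \alpha\frac{1-\gamma}{(1-\alpha)\gamma}$. I expect the published proof to be essentially one line citing this; below I spell out the routine $\varepsilon/3$ argument for completeness.

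Fix $\bar x \in [0,1[$ and $\varepsilon > 0$. Choose $i$ large enough that $C/2^{i-1} < \varepsilon/3$, so that $|\psi(x) - \psi_i(x)| < \varepsilon/3$ for all $x$. Since $\psi_i$ is continuous at $\bar x$, pick $\delta > 0$ with $|\psi_i(x) - \psi_i(\bar x)| < \varepsilon/3$ whenever $|x - \bar x| < \delta$. Then, for all such $x$, the triangle inequality gives
\[
|\psi(x) - \psi(\bar x)| \le |\psi(x) - \psi_i(x)| + |\psi_i(x) - \psi_i(\bar x)| + |\psi_i(\bar x) - \psi(\bar x)| < \varepsilon .
\]
Thus $\psi$ is continuous at $\bar x$; as $\bar x$ was arbitrary, $\psi$ is continuous on $[0,1[$. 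The same argument applies verbatim to the periodic extension of $\psi$ to $\mathbb{R}$, since each $\psi_i$ extends to a continuous periodic function and the uniform bound is unaffected.

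There is no real obstacle here: all the work was done in establishing the previous lemma, and in particular in observing that the bound $C/2^{i-1}$ on $|\psi_i(x) - \psi(x)|$ is independent of $x$. That independence is precisely what the $\varepsilon/3$ argument needs — the index $i$ must be chosen once and serve for every $x$ near $\bar x$ — so it is worth stressing that pointwise convergence alone would not yield continuity of $\psi$.
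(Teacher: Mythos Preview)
Your proof is correct and follows exactly the same approach as the paper: both invoke the standard fact that a uniform limit of continuous functions is continuous, using the preceding lemma for uniform convergence and the continuity of each $\psi_i$. The only difference is cosmetic --- the paper cites a textbook for this fact, whereas you spell out the $\varepsilon/3$ argument explicitly.
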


\begin{proof}
$\psi_i$ is continuous for each $i \in \{1,2,3,\ldots\}$ by
construction. Since this sequence of functions converges uniformly
to $\psi$, $\psi$ is continuous~\cite{royden}.
\end{proof}

For each integer $i \geq 0$, define $S_i$ to be the subset of points of $]0,1[$
over which the function $\psi_i$ has negative slope. By Fact~\ref{fact:measures}, $S_i$ is the union of $2^i$ open intervals. Furthermore, by
construction $S_i\subseteq S_{i+1}$ for every $i\in\mathbb{N}$. The set $S\subseteq [0,1]$ defined by
$$S=\cup_{i=0}^\infty S_i,$$ is the set of points over which $\psi$ has negative slope, and it is an open set since it is the union of open intervals.

\begin{fact}\label{fact:dense}
The set $S$ is dense in $[0,1]$.
\end{fact}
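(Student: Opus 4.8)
The plan is to show that the total length (Lebesgue measure) of the positive-slope set $[0,\alpha]\setminus S$ shrinks to zero as $i\to\infty$, and then argue that a set whose complement has measure zero inside $[0,\alpha]$, together with the full interval $[\alpha,1]$ where $\psi$ already has negative slope, cannot miss any nondegenerate subinterval, hence is dense. First I would record, from Fact~\ref{fact:measures}, that at stage $i$ the set $[0,\alpha]$ is partitioned into $2^i$ positive-slope intervals and the negative-slope intervals $S_i\cap[0,\alpha]$, and that the total length of the positive-slope intervals at stage $i$ is exactly $\gamma_i = \alpha - \sum_{k=1}^i 2^{k-1}\epsilon_k$. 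By Fact~\ref{fact:slope}, $\gamma_i \downarrow \gamma = \alpha - \sum_{k=1}^{\infty} 2^{k-1}\epsilon_k > 0$, so this naive bound does \emph{not} go to zero — the positive-slope set retains positive measure. This means the measure argument alone is insufficient, and I must instead use the self-similar \emph{location} of the positive-slope intervals.

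The key structural fact is that the positive-slope intervals become finely distributed: I claim that at stage $i$, each of the $2^i$ positive-slope intervals has length $\gamma_i/2^i \le \alpha/2^i$, while consecutive positive-slope intervals are separated only by negative-slope intervals that also lie inside $[0,\alpha]$. Consequently, every point of $[0,\alpha]$ is within distance $\alpha/2^{i-1}$ of a negative-slope interval of $\psi_i$, hence within that distance of a point of $S_i\subseteq S$. Letting $i\to\infty$, every point of $[0,\alpha]$ is a limit of points of $S$, so $[0,\alpha]\subseteq \overline{S}$. To make the ``within distance $\alpha/2^{i-1}$'' claim precise I would argue by induction on $i$ using the construction: the map from $\psi_i$ to $\psi_{i+1}$ replaces each positive-slope interval $[a,b]$ by two positive-slope intervals each of which is contained in $[a,b]$ and has length $(b-a)/2 - \epsilon_{i+1}/2 < (b-a)/2$, separated by a new negative-slope interval; so the maximal length of a positive-slope interval at least halves at each step, giving length $\le \gamma_0/2^i = \alpha/2^i$ at stage $i$, and every positive-slope interval of $\psi_{i+1}$ sits inside a positive-slope interval of $\psi_i$.

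Finally, $[\alpha,1[\subseteq S_0\subseteq S$ directly from the definition of $\psi_0$ (indeed $S_0$ is the single interval $]\alpha,1[$), so $[0,1]\subseteq \overline{S}$, i.e. $S$ is dense in $[0,1]$. The main obstacle is the false start noted above: one is tempted to prove density via a measure/Baire argument, but since $S$ has measure $1-\gamma < 1$ and its complement in $[0,\alpha]$ has measure $\gamma>0$, density must be extracted from the geometric fact that the surviving positive-slope intervals, though of positive total length, are spread out into $2^i$ pieces each of length $O(2^{-i})$ — so no interval escapes $S$. The only real work is the clean inductive bookkeeping showing each positive-slope interval of $\psi_i$ has length at most $\alpha 2^{-i}$ and that the negative-slope intervals separating them are interleaved throughout $[0,\alpha]$.
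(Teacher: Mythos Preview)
Your argument is correct and is essentially the paper's own proof: the key point in both is that each positive-slope interval of $\psi_i$ has length $\gamma_i/2^i<2^{-i}$ (Fact~\ref{fact:measures}), so any point lies within $2^{-i}$ of $S_i\subseteq S$. Your extra inductive bookkeeping and the discussion of the measure ``false start'' are not needed, since Fact~\ref{fact:measures} already hands you the exact interval length directly.
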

\begin{proof} Let $a\in[0,1]$. We need to show that, for any
$\delta>0$, there exists $b\in S$ such that $|a-b|<\delta$. Choose
$i$ such that $\frac{1}{2^i}<\delta$. If $\psi_i$ has negative slope
in $a$, then $a\in S$ and we are done. Thus $a$ is in a positive
slope interval of $\psi_{i}$. By Fact~\ref{fact:measures}, such an
interval has length $\frac{\gamma_i}{2^i}$, hence there exists a
point $b$ in a negative slope interval of $\psi_{i}$, and thus in
$S$, such that $|a-b|\leq \frac{\gamma_i}{2^i}<\delta$, since
$\gamma_i < 1$.
\end{proof}

\begin{fact} The function $\psi$ is not piecewise linear.\end{fact}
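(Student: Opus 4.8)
The plan is to exploit the tension between two facts already established: the set $S$ on which $\psi$ has negative slope is open and dense in $[0,1]$ (Fact~\ref{fact:dense}), whereas on $S$ the slope is always exactly $-\frac{1}{1-\alpha}$, a single negative value. A genuinely piecewise linear function with only finitely many pieces cannot accommodate a dense set of points all carrying the same fixed nonzero slope unless that slope is the slope of every piece.

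First I would record the observation, immediate from the construction, that the passage from $\psi_i$ to $\psi_{i+1}$ modifies only the maximal positive-slope segments and leaves every negative-slope segment untouched; hence $\psi_j$ agrees with $\psi_i$ on $S_i$ for every $j \geq i$, and therefore $\psi$ agrees with $\psi_i$ on $S_i$. Since $\psi_i$ is affine with slope $-\frac{1}{1-\alpha}$ on each component interval of $S_i$ (Fact~\ref{fact:measures}), it follows that $\psi$ is differentiable at every point $x \in S = \bigcup_{i} S_i$, with $\psi'(x) = -\frac{1}{1-\alpha}$.

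Next, suppose for contradiction that $\psi$ is piecewise linear, say $\psi(r) = a_j r + b_j$ on $[r_{j-1}, r_j[$ for values $0 = r_0 < r_1 < \cdots < r_k = 1$. For each $j$, the nonempty open interval $\,]r_{j-1}, r_j[\,$ meets the dense open set $S$, so it contains a point $x \in S$; at $x$ we have $\psi'(x) = a_j$ on one hand and $\psi'(x) = -\frac{1}{1-\alpha}$ on the other, whence $a_j = -\frac{1}{1-\alpha}$ for every $j$. Since $\psi$ is continuous (as shown above) and $\psi(0) = \lim_i \psi_i(0) = 0$, gluing the pieces with the aid of continuity gives $\psi(r) = -\frac{r}{1-\alpha}$ for all $r \in [0,1[$, which is strictly negative for $r \in \,]0,1[\,$. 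This contradicts $\psi \geq 0$, which holds because $\psi$ is the pointwise limit of the nonnegative functions $\psi_i$ (Fact~\ref{fact:non-neg}).

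The one place requiring a little care — and the closest thing to an obstacle — is the first step: making precise that $\psi$ inherits the slope $-\frac{1}{1-\alpha}$ on all of $S$, i.e., that the recursive construction never disturbs a negative-slope interval once it has been created, so that $\psi$ coincides with $\psi_i$ locally around any point of $S_i$. Once that is granted, the remainder is a short deduction from density, continuity, and nonnegativity.
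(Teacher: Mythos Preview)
Your proof is correct and follows essentially the same approach as the paper: use the density of $S$ to force any linear piece of $\psi$ to carry the negative slope $-\frac{1}{1-\alpha}$, and then contradict nonnegativity via $\psi(0)=0$. The paper's version is a touch more economical in that it applies this argument only to the first piece $[0,r_1[$, so the contradiction $\psi(x)<0$ on $]0,r_1[$ follows immediately without needing continuity to glue all the pieces into a single global line.
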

\begin{proof}
Suppose by contradiction that $\psi$ is piecewise linear. Then, for some $\varepsilon>0$, the restriction of $\psi$ to $]0,\varepsilon[$ is linear. By Fact~\ref{fact:dense}, $]0,\varepsilon[$ contains a point of $S$, therefore $\psi$ has constant negative slope in $]0,\varepsilon[$. Since $\psi(0)=0$, then $\psi(x)<0$ for every $x\in]0,\varepsilon[$, a contradiction.
\end{proof}

\begin{lemma}\label{lemma:psi-subadditive}
The function $\psi$ is a minimal valid function.
\end{lemma}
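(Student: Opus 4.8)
<br>

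<br>

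The plan is to verify the three hypotheses of the Minimality Theorem (Theorem~\ref{thm:gj}): namely that $\psi(0)=0$, $\psi(f)=1$ with $f=\alpha$, and that $\psi$ is subadditive and symmetric. The point values are immediate: since each $\psi_i(0)=0$ and $\psi_i(\alpha)=1$ by construction (Fact~\ref{fact:measures}), the same holds for the pointwise limit $\psi$. Symmetry also passes to the limit trivially: each $\psi_i$ satisfies $\psi_i(a)+\psi_i(\alpha-a)=1$, so taking $i\to\infty$ gives $\psi(a)+\psi(\alpha-a)=1$.

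The substantive point is subadditivity. Here I would exploit the uniform convergence established in the previous lemma. Each $\psi_i$ is subadditive, so $\psi_i(a+b)\le\psi_i(a)+\psi_i(b)$ for all $a,b$ (with addition mod $1$). Since $\psi_i(x)\to\psi(x)$ for every $x$, passing to the limit in this inequality yields $\psi(a+b)\le\psi(a)+\psi(b)$; note that a non-strict inequality is preserved under pointwise limits, and pointwise convergence alone suffices for this step, so the uniform convergence is not even strictly needed here. One should just be careful that the ``addition mod $1$'' structure is consistent: all $\psi_i$ and $\psi$ are viewed as functions on $[0,1[$ extended periodically to $\mathbb{R}$, and the subadditivity inequality for $\psi_i$ holds in that extended sense, so the limiting inequality does too.

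Having verified the hypotheses of Theorem~\ref{thm:gj}, we conclude that $\psi$ is a valid and minimal function, which is exactly the statement of Lemma~\ref{lemma:psi-subadditive}. I do not anticipate a serious obstacle: the only thing to watch is that we have genuinely established $\psi(0)=0$ and $\psi(\alpha)=1$ from the construction rather than from the limit behaving pathologically, but since these hold for every term of the sequence, they hold for the limit. One could also remark that nonnegativity of $\psi$ (which is implied by minimality via Theorem~\ref{thm:gj}, but is in any case inherited as a pointwise limit of the nonnegative functions $\psi_i$ from Fact~\ref{fact:non-neg}) is consistent with everything. Thus the proof is essentially a three-line application of the Minimality Theorem together with the observation that subadditivity, symmetry, and the two normalizing equations are all preserved under pointwise limits.
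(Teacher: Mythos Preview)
Your proposal is correct and follows essentially the same approach as the paper: invoke Theorem~\ref{thm:gj} and observe that subadditivity and symmetry (together with the boundary values $\psi(0)=0$, $\psi(\alpha)=1$) are preserved under pointwise limits. The paper's proof is a single sentence to this effect; your version simply makes the routine details explicit.
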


\begin{proof} By Theorem~\ref{thm:gj}, we only need to show that $\psi$ is subadditive and symmetric.
This follows from pointwise convergence.
\end{proof}

We finally show that the function $\psi$ is a facet.

\begin{theorem}
The function $\psi$ is a facet for the problem (\ref{SI}).
\end{theorem}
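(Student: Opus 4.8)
The plan is to apply the Facet Theorem (Theorem~\ref{thm:facet_thm}): since $\psi$ is already a minimal valid function by Lemma~\ref{lemma:psi-subadditive}, it suffices to show that the only minimal valid function $\pi$ satisfying all the equalities in $E(\psi)$ is $\psi$ itself. So I would fix an arbitrary minimal valid function $\pi$ with $E(\pi)\supseteq E(\psi)$ and prove $\pi=\psi$.

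The key observation is that $\psi$ is affine with slope $-\frac{1}{1-\alpha}$ on each of the open intervals comprising $S=\cup_i S_i$, and on each such interval $\psi$ inherits many additivity relations. Concretely, I would first identify a rich family of equalities in $E(\psi)$: because every negative-slope interval of some $\psi_i$ is, together with a suitable companion interval, a pair on which subadditivity of $\psi_i$ holds with equality (this is exactly how the two-slope facet proof via the Interval Lemma works, cf.\ Theorem~\ref{thm:2-slopes}), and because $\psi_j=\psi$ on the negative-slope intervals of $\psi_i$ for all $j\ge i$, these additivity relations survive in the limit and lie in $E(\psi)$. Having such relations, I would invoke the Interval Lemma (Lemma~\ref{interval_lemma}), which applies since minimal valid functions are bounded: on each open interval $I$ of $S$, together with its companion interval, $\pi$ must be affine, with a common slope $c_I$ on $I$. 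Using overlapping/adjacent intervals of $S$ and the fact that $S$ has only one ``shape'' of large interval (the length-$(1-\alpha)$ interval) while the others accumulate, I would argue the slope is the same constant $c$ across all of $S$; evaluating against a known equality (e.g.\ $\pi(0)=0$ together with an additivity relation linking an endpoint of $S$ to $0$, or symmetry $\pi(a)+\pi(f-a)=1$) pins down $c=-\frac{1}{1-\alpha}$ and the additive constants, so $\pi$ agrees with $\psi$ on all of $S$.

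It then remains to propagate this to the complement of $S$. Here I would use density of $S$ in $[0,1]$ (Fact~\ref{fact:dense}) together with a continuity/approximation argument: for any point $x\notin S$ and any $\epsilon$, pick points of $S$ on both sides of $x$ whose $\psi$-values are known; the subadditivity inequalities $\pi(x)\le \pi(s)+\pi(x-s)$ and $\pi(s)\le \pi(x)+\pi(s-x)$ for $s\in S$ near $x$ (with $x-s$, $s-x$ also lying in $S$ when the points are chosen inside a common interval's additive structure), combined with the already-established values on $S$ and the boundedness/minimality of $\pi$, squeeze $\pi(x)$ to equal $\psi(x)$. Alternatively, and perhaps more cleanly, I would show $E(\psi)$ already forces $\pi$ to be affine on each maximal positive-slope ``gap'' between consecutive large intervals of $S$ — but since $\psi$ is not piecewise linear, these gaps shrink to nothing, so density plus the sandwiching is really doing the work. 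Once $\pi=\psi$ everywhere, Theorem~\ref{thm:facet_thm} gives that $\psi$ is a facet.

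The main obstacle I anticipate is the propagation step off of $S$: establishing that the additivity relations in $E(\psi)$ are plentiful enough — in particular that for each $x\in(0,1)$ there are enough $s$ with $s, x-s$ (or $s-x$) controlled and equalities available — to pin down $\pi(x)$ without any continuity hypothesis on $\pi$. The cleanest route is probably to first prove $\pi$ is continuous (or at least locally bounded and agreeing with $\psi$ on a dense set forces equality via the squeeze above), exploiting that any minimal valid function is bounded by $1$ and nonnegative, so the slack in the subadditivity inequalities near a point of $S$ tends to $0$. Handling the modular (mod $1$) wraparound carefully, as in Case~2 of the subadditivity proof, will be the fiddly part of that argument.
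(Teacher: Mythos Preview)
Your overall strategy matches the paper's exactly: apply the Facet Theorem, use the Interval Lemma on the negative-slope pieces to force any competing minimal valid function $\pi$ to be affine there, pin down $\pi=\psi$ on $S$, and then propagate via density. Two concrete points where your sketch is vaguer than the paper's argument, and where you risk getting stuck:

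\textbf{Pinning down the values on $S$.} The components of $S$ are pairwise disjoint and not adjacent, so ``overlapping/adjacent intervals'' is not the mechanism. What the paper exploits is that for each negative-slope interval $]a,b[$ one can take $U=[(a+b)/2,b]$ and $V=[1-(b-a)/2,1]$; since every such $V$ sits inside the big interval $[\alpha,1]$, the Interval Lemma forces all slopes to agree with the slope on $[\alpha,1]$, which is $-\tfrac{1}{1-\alpha}$ from the boundary data $\pi(\alpha)=1$, $\pi(1)=0$. The additive constants on the smaller intervals are then obtained by induction on the first level $i$ at which an interval appears in $S_i$, using the self-similarity: if $m$ is the midpoint of a level-$(k{+}1)$ interval then $2m$ is the left endpoint of a level-$k$ interval and $\psi(m)+\psi(m)=\psi(2m)$ lies in $E(\psi)$, so $\pi(m)=\tfrac12\pi(2m)=\tfrac12\psi(2m)=\psi(m)$; a second additivity relation $\psi(m_s)+\psi(m)=\psi(m_s+m)$ handles the remaining midpoints at that level. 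Your ``additivity relation linking an endpoint of $S$ to $0$'' is not available, since those endpoints are not in $S$.

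\textbf{Propagation off $S$.} This is where you anticipate trouble, and the paper's resolution is much simpler than the routes you propose. You do not need to squeeze $\pi(x)$ from both sides, and you do not need to prove $\pi$ continuous. It suffices to show the one-sided inequality $\pi\le\psi$: for any $\bar x$ pick $y_n,z_n\in S$ with $y_n+z_n=\bar x$ and $z_n\to 0$ (possible because $S$ is open and dense), then
\[
\pi(\bar x)\le \pi(y_n)+\pi(z_n)=\psi(y_n)+\psi(z_n)\longrightarrow \psi(\bar x),
\]
where the limit uses only the continuity of $\psi$. Now both $\pi$ and $\psi$ are minimal, hence symmetric: $\pi(x)+\pi(f-x)=1=\psi(x)+\psi(f-x)$. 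Combined with $\pi\le\psi$ this forces $\pi=\psi$. No continuity or two-sided sandwich on $\pi$ is needed; this symmetry trick is the missing idea in your propagation step.
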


\begin{proof}

We will use the Facet Theorem (Theorem~\ref{thm:facet_thm}). We show
that if a minimal valid function satisfies the set of equalities
$E(\psi)$, then it coincides with $\psi$ everywhere.

Consider any minimal valid function $\phi$ that satisfies the set of
equalities $E(\psi)$. Therefore, if $\psi(u) + \psi(v) = \psi(u+v)$,
then $\phi(u) + \phi(v) = \phi(u+v)$.

We first show the following.\medskip

\noindent{\em Claim 1. $\phi(x) = \psi(x) \textrm{ for all } x\in S$ }\medskip

For any maximal interval $]a,b[$ over which the graph of $\psi$ has
a negative slope, consider the following intervals : $U=[(a+b)/2,
b]$, $V=[1-((b-a)/2), 1]$ and therefore $U+V = [a,b]$. It is easy to
see that $\psi(u) + \psi(v) = \psi(u+v)$ for $u\in U$, $v\in V$.
This implies $\phi(u)  + \phi(v) = \phi(u+v)$. Now
Lemma~\ref{interval_lemma} (the Interval Lemma) implies that $\phi$
are straight lines over $U, V$ and $U+V$.

We now use an inductive argument to prove that not only do the
slopes of $\psi$ and $\phi$ coincide on intervals where the slope of
$\psi$ is negative, in fact $\psi(x) = \phi(x)$ for all $x$ in these
intervals.

Every maximal segment $s$ with negative slope in $\psi$ also appears
in $\psi_i$ for some $i$ (i.e. $s \subseteq S_i$ for some $i$). Let
$index(s)$ be the least such $i$. We prove that $\psi(x)=\phi(x)$
for every $s$ with negative slope by induction on $index(s)$.
$\phi(\alpha) = \psi(\alpha) = 1$ and $\phi(0) = \psi(0) = 0$ since
$\phi$ is assumed to be a valid inequality. This implies that $\phi$
is the same as $\psi$ in the range $[\alpha,1]$. This proves the
base case of the induction.

By the induction hypothesis, we assume the claim is true for
negative-slope segments $s$ with $index(s) = k$. Consider all
negative-slope segments $s$ with $index(s)=k+1$. Amongst these
consider the segment $s_c$ which is closest to the origin. Let the
midpoint of this segment be $m$. We know that $2m$ is the start of a
negative-slope segment $s'$ in $\psi$ with $index(s') = k$. By
construction, $\psi(m) + \psi(m) = \psi(2m)$. So $\phi(m) + \phi(m)
= \phi(2m)$. From the induction hypothesis, we know that $\psi(2m) =
\phi(2m)$ and so $\phi(m) = \frac{1}{2}\phi(2m) =
\frac{1}{2}\psi(2m) = \psi(m)$.

Now consider any other negative-slope segment $s$ with $index(s) =
k+1$ and let its midpoint be $m_s$. Note that $m_s + m$ is the start
of a negative-slope segment $s'$ with $index(s') = k$. So
\begin{equation}   \label{eq1}
\phi(m_s + m) = \psi(m_s + m)
\end{equation}
\noindent because of the inductive hypothesis. Note that $\psi(m_s +
m) = \psi(m_s) + \psi(m)$ by construction. So, $\phi(m_s + m) =
\phi(m_s) + \phi(m)$. Since we showed that $\phi(m) = \psi(m)$,
(\ref{eq1}) implies that $\phi(m_s) = \psi(m_s)$. Since the values
coincide at the midpoints of these segments and the slopes of the
segments are the same, $\phi(x)=\psi(x)$ for any $x$ in the domain
of these segments. This concludes the proof of Claim~1.\medskip

We now use Claim~1 to show that
\begin{equation}\label{eq:less}
\phi(x) \leq \psi(x) \textrm{ for all
} x\in [0,1[.
\end{equation}
 By Lemma \ref{lemma:psi-subadditive} $\psi$ is a minimal valid function and $\phi$ is assumed to be minimal,
 thus symmetry combined with (\ref{eq:less}) would imply that
$\phi(x) = \psi(x) \textrm{ for all } x\in [0,1[$.
\bigskip

\noindent{\em Claim 2.
Let $\bar x \in ]0,1[$. For every positive integer $n$, there
exists $y_n, z_n \in S$ such that $0 < z_n < \frac{1}{n}$ and $y_n +
z_n = \bar x$.
}
\medskip

Since $S$ is open and, by Fact~\ref{fact:dense}, it is dense in $[0,1]$,
for any positive integer $n$ there exists
an open interval $I \subseteq S$ such that $0 < \bar x - y <
\frac{1}{n}$ for all $y\in I$. Let $I = ]u_1, u_2[$ with $u_1 \neq
u_2$. Also, since $S$ is dense in $[0,1]$, there exists $z \in S \cap
]\bar x - u_2, \bar x - u_1[$. Let this $z$ be $z_n$ and $y_n = \bar
x - z \in I$. Since $I \subseteq S$, we have that $y_n \in S$. This concludes the proof of Claim~2.\medskip

Note that the sequence $\{y_n\}$ converges to $\bar x$ and $\{z_n\}$
converges to 0. For every positive integer $n$ we have
\[
\begin{array}{crcl}
& \phi(\bar x)  & = & \phi(y_n + z_n) \\
& & \leq & \phi(y_n) + \phi(z_n) \qquad (\textrm{By subadditivity of }\phi)\\
& & = &\psi(y_n) + \psi(z_n) \\
\Rightarrow & \phi(\bar x) & \leq & \lim_{n \rightarrow \infty}
(\psi(y_n) + \psi(z_n))\\
& & = & \psi(\bar x) \qquad (\textrm{By continuity of }\psi)
\end{array}
\]

\end{proof}
\section*{Acknowledgment}

We are particularly indebted to Santanu Dey for fruitful discussions about the Interval Lemma and its variants.
We also thank the referees for many helpful comments and suggestions.  \old{
\section{Conclusion}\label{SEC-conclusion}

The definition of valid function given by Gomory and
Johnson~\cite{gj}, requires a valid function to be continuous. If we
drop this assumption in the definition, then there are extreme
functions that are not continuous, as shown by Dey et
al.~\cite{DRLM}. The continuity assumption is crucial in our proof,
when showing that $\psi$ is a facet. Indeed, when showing that the
set of equalities $E(\psi)$ has only $\psi$ as a solution, we show
that for any other valid function $\phi$ satisfying $E(\psi)$ the
functions $\psi$ and $\phi$ coincide on a dense subset of the unit
interval, and thus they coincide everywhere by continuity of $\phi$
and $\psi$. }

\end{document}